\documentclass[12pt]{amsart}
\usepackage[margin=1.25in]{geometry}
\usepackage[toc,page]{appendix}
\usepackage{amsmath, amssymb}
\usepackage{hyperref}
\usepackage{cleveref}
\usepackage{tikz, ytableau}
\usepackage[backend=bibtex,natbib=true,style=alphabetic,maxnames=99]{biblatex}
\addbibresource{refs.bib}

\title{Maximal transitivity of the cactus group on standard Young tableaux}
\author{Sophia Liao and Leonid Rybnikov}

\newcommand{\syt}{\text{SYT}}

\newtheorem{thm}{Theorem}[section]
\newtheorem{prop}{Proposition}[section]
\newtheorem{cor}{Corollary}[section]

\newtheorem{lemma}{Lemma}[section]

\theoremstyle{definition}
\newtheorem*{defn*}{Definition}
\newtheorem{example}{Example}[section]

\begin{document}

\maketitle

\begin{abstract} The action of the cactus group $C_n$ on Young tableaux of a given shape $\lambda$ goes back to Berenstein and Kirillov and arises naturally in the study of crystal bases and quantum integrable systems. We show that this action is $2$-transitive on standard Young tableaux of the shape $\lambda$ if and only if $\lambda$ is not self-transpose and not a single hook. Moreover, we show that in these cases, the image of the cactus group in the permutation group of standard Young tableaux is either the whole permutation group or the alternating group, and prove that both cases are possible for infinitely many $\lambda$ (though the alternating group is more frequent). As an application, this implies that the Galois group of solutions to the Bethe ansatz in the Gaudin model attached to the Lie group $GL_d$ is, in many cases, at least the alternating group. This also extends the results of Sottile and White on the multiple transitivity of the Galois group of Schubert calculus problems in Grassmannians to many new cases.
\end{abstract}

\section{Introduction}\label{se:intro}

\subsection{Cactus group} The \emph{cactus group} \( C_n \) arises naturally in the study of crystal bases, combinatorics of tableaux, and quantum integrable systems. It can be defined in several equivalent ways. One definition presents \( C_n \) as the group generated by involutions \( s_{[i,j]} \) for all intervals \( [i,j] \subseteq [1,n] \), subject to the relations:
\begin{enumerate}
    \item \( s_{[i,j]}^2 = 1 \),
    \item \( s_{[i,j]} s_{[k,l]} = s_{[k,l]} s_{[i,j]} \) if \( [i,j] \cap [k,l] = \emptyset \),
    \item \( s_{[i,j]} s_{[k,l]} s_{[i,j]} = s_{[i+j-l,i+j-k]} \) if \( [k,l] \subseteq [i,j] \).
\end{enumerate}
An equivalent definition realizes \( C_n \) as the \emph{$S_n$-equivariant fundamental group} of the real locus \( \overline{M}_{0,n+1}(\mathbb{R}) \)  of the Deligne-Mumford moduli space $\overline{M}_{0,n+1}$ of stable genus 0 curves with \( n+1 \) marked points. 

More generally, the \emph{W-cactus group} \( \mathrm{Cact}_W \) was introduced by Davis, Januszkiewicz and Scott \cite{Davis2003Fundamental} as the fundamental group of an iterated blow-up of the projective hyperplane arrangement associated with a finite Coxeter group \( W \) and later studied by Henriques and Kamnitzer as a ``crystal limit'' of the Artin braid group $Br_W$ (see \cite{HK} for type A and \cite{Halacheva_Kamnitzer_Rybnikov_Weekes_2020,Halacheva2020,Losev2019,Bonnafe2016} for arbitrary type). It is an abstract group generated by involutions \( s_I \) corresponding to all connected subdiagrams \( I \subseteq S \) of the Coxeter diagram of \( W \), where \( S \) is the set of simple reflections. The group relations are: (1) \( s_I^2 = 1 \); (2) if \( I \) and \( J \) are disconnected, then \( s_I s_J = s_J s_I \); and (3) if \( J \subset I \), then \( s_I s_J s_I = s_{w_I(J)} \), where \( w_I \) is the longest element in the parabolic subgroup \( W_I \) and acts by diagram automorphisms on subsets. This generalizes the classical cactus group (which corresponds to \( W = S_n \)), and captures the symmetries arising in the category of crystals for reductive Lie algebras. In particular, \( \mathrm{Cact}_W \) acts on the crystal graph of an irreducible representation of \( \mathfrak{g} \) with Weyl group \( W \), with each \( s_I \) acting as a partial Schützenberger involution on the crystal restricted to weight strings associated with \( W_I \).

In \cite{Losev2019} and \cite{Bonnafe2016}, Losev and Bonnaf\'e constructed a $\textrm{Cact}_W$-action on one-side Kazhdan-Lusztig cells in $W$. For $W$ being a Weyl group of a finite-dimensional semisimple Lie algebra $\mathfrak{g}$, this action reflects the structure of the category \( \mathcal{O} \) for $\mathfrak{g}$ and intertwines with the duality and wall-crossing functors, making the cactus action categorical in nature.

\subsection{$C_n$-action on Standard Young Tableaux}  For $W=S_n$, the Kazhdan-Lusztig cells are identified with sets of \emph{standard Young tableaux} of a given shape $\lambda$ of size \( n \) via the Robinson–Schensted correspondence. Explicitly, each generator \( s_{[i,j]} \) acts by reversing the order of the entries in the subword \( w_i, w_{i+1}, \dots, w_j \) of a permutation \( w \in S_n \), interpreted in terms of the corresponding standard tableau. Equivalently, the generator \( s_{[i,j]} \) acts as the Sch\"utzenberger involution of the (skew) subtableau formed by the boxes containing thenumbers \(i,i+1,\ldots,j\).

Following Berenstein and Kirillov \cite{Berenstein_Kirillov_1996}, Chmutov, Glick, and Pylyavskyy \cite{Chmutov_Glick_Pylyavskyy_2020} considered the more general action of $C_n$ on the set of \emph{semistandard Young tableaux} of shape \( \lambda \), which label basis elements in the crystal \( B(\lambda) \) for \( \mathfrak{sl}_n \). They expressed the action of \( s_{[i,j]} \in C_n \) in terms of simpler \emph{Bender-Knuth} involutions $t_1,\ldots,t_{n-1}$ and showed that Bender-Knuth involutions can be expressed in terms of \( s_{[i,j]}\). For the $C_n$ action on standard Young tableaux, the involution $t_i$ switches $i$ with $i+1$ if this keeps the table standard (and does nothing otherwise). The observation that Bender-Knuth involutions generate the same group of transformations of standard Young tableaux as the Sch\"utzenberger involutions makes this group action much easier to explore.

\subsection{Main results} A group action is called $k$-transitive if, for any two ordered sets of $k$ distinct elements from the set being acted upon, there exists a group element that maps the first set to the second set. The main question we address in this paper is what is the maximal $k$ such that $C_n$-action on standard Young tableaux of the given shape is $k$-transitive. 

Let $\lambda$ be a partition of $n$ (i.e. a Young diagram with $n$ boxes). Denote by $\syt(\lambda)$ the set of all standard tableaux of the shape $\lambda$. The main results of the present paper are the following two statements. 

\medskip

\noindent \textbf{Theorem A.} Let $\lambda$ be a partition of $n$. \emph{\begin{enumerate}
\item The action of $C_n$ on $\syt(\lambda)$ is always transitive. 
\item This action is double transitive unless $\lambda$ is hook-shaped or self-transpose. \item The only $C_n$-invariant of pairs of standard Young tableaux of a hook-shape $\lambda$ is the cardinality of the set of numbers contained in the first row of both diagrams. 
\item The only invariant of pairs of tableaux for self-transpose $\lambda$ is whether the elements of the pair are different or not up to transposition.
\end{enumerate}}

\medskip

\noindent \textbf{Remark.} In fact, we prove a stronger statement on the transitivity of $C_n$ action on pairs of standard Young tableaux of \emph{different} shapes, see Theorem~\ref{2-trans}, and get the above $2$-transitivity result as a consequence (Corollary~\ref{co:2-trans}).

\medskip

\noindent \textbf{Theorem B.} \emph{Suppose that $\lambda$ is not hook-shaped and not self-transpose. Let $N$ be the cardinality of $\syt(\lambda)$. Then the image of $C_n$ in the permutation group $S_N$ of the set $\syt(\lambda)$ is either the whole $S_N$ or the alternating group $A_N$.}

\medskip

The strategy of the proof is, first, to show that the action of $C_n$ on $\syt(\lambda)$ is 3-transitive unless $\lambda$ is hook-shaped or self-transpose (similarly to $2$-transitivity, we in fact prove a stronger statement about transitivity of the $C_n$ action on triples of Young tableaux of \emph{different} shapes, see Theorem~\ref{th:3-trans} in the text). Next, according to \cite{Cameron_book}, this leaves the following three possibilities apart from $A_N$ or $S_N$:
\begin{enumerate}
    \item $\syt(\lambda)=\mathbb{F}_2^d$, an affine space over the $2$-element field $\mathbb{F}_2$, and the image of $C_n$ is (a subgroup of) the group $AGL(d,2)$ of affine transformations of this space.
    \item $\syt(\lambda)=\mathbb{F}_q\mathbb{P}^1$, the projective line over a finite field $\mathbb{F}_q$, and the image of $C_n$ is the group of projective transformations of the line $PGL(2,q)$.
    \item Sporadic examples of Mathieu groups $M_N$ acting on the $N$-element set, for $N=11,12,22,23,24$. 
\end{enumerate}
In all of the above examples, it is known how many fixed points may have involutions from the group. On the other hand, we can produce conjugacy classes of involutions in the image of the cactus group $C_n$ that have different numbers of fixed points, by composing independent Bender-Knuth involutions. This allows to exclude all the above cases.

\medskip

\noindent \textbf{Remark.} Both $S_N$ and $A_N$ are possible. The numerical experiments show that the whole symmetric group appears more frequently for smaller $n$, but occurrences of $A_N$ catch up as $n$ gets bigger. We expect that the cases where we have $A_N$ dominate when $n\to\infty$, though there are infinitely many cases where we get $S_N$. We discuss this in Section~\ref{se:further} of the paper, giving some sufficient condition for the image of the cactus group being $A_N$, and in the Appendix, giving the results of numerical experiments.

\subsection{Application to Galois group of Bethe ansatz equations for the $GL(N)$ Gaudin model} 
In \cite{Halacheva_Kamnitzer_Rybnikov_Weekes_2020}, Halacheva, Kamnitzer, Weekes, and the second author interpreted the action $C_n$-action on a Kashiwara crystal as a \emph{monodromy action} on solutions to Bethe ansatz in the Gaudin model (that is a completely integrable quantum spin chain arising as a degeneration of the XXX Heisenberg model).
This monodromy action can be viewed as a part of the \emph{Galois group action} on the set of solutions to the \emph{Bethe ansatz equations} in the Gaudin model attached to the Lie group $GL_d$. The Bethe ansatz equations depend on a collection of Young diagrams $\lambda^{(1)},\ldots,\lambda^{(n)}$ and $\mu$ of the height not greater than $d$. It is known (see e.g. \cite{MTV_symmetric_gp}) that once all $\lambda^{(i)}=(1)$, the solutions of Bethe ansatz are one-to-one with $\syt(\mu)$. Next, according to \cite{Halacheva_Kamnitzer_Rybnikov_Weekes_2020}, the natural monodromy $C_n$-action on this set is isomorphic to the above $C_n$-action on $\syt(\mu)$. So, our Theorem~B implies the following:

\medskip

\noindent \textbf{Theorem C.} \emph{Let the weights $\lambda^{(1)},\ldots,\lambda^{(n)}$ and $\mu$ be as follows:
\begin{itemize}
    \item $\mu$ is a non-hook-shaped and not self-transpose partition of $n$;
    \item  for all $i=1,\ldots,n$, $\lambda^{(i)}=(1)$ i.e. one-box Young diagrams.
\end{itemize} 
The Galois group of solutions to Bethe ansatz equations for the Gaudin model corresponding to $\lambda^{(1)},\ldots,\lambda^{(n)},\mu$ is at least the alternating group.}

\subsection{Application to Galois group of the Schubert calculus in Grassmannians} As shown by Mukhin, Tarasov, and Varchenko \cite{MTV_Schubert,MTV_symmetric_gp}, these equations correspond to special instances of \emph{Schubert calculus problems} on Grassmannians, more precisely, to the intersection points of real osculating Schubert varieties, see also \cite{White_2018,Brochier_Gordon_White_2023}. The Galois group acts naturally on the set of Bethe solutions, and the image of the cactus group can be identified with a subgroup there. 

This relates our result to the general problem of computing Galois groups of Schubert problems in Grassmannians, studied by Sottile and White in \cite{SW}. The \emph{Galois group} of a Schubert problem describes how the transversal $0$-dimensional intersections of Schubert varieties vary as the defining flags change continuously. Algebraically, one studies the function field of the parameter space of flags and the splitting field of the solutions to the Schubert problem over it. The Galois group of this field extension acts on the solutions by permutation, revealing how they move as parameters change around various loops in the parameter space of flags.

In \cite{SW}, Sottile and White showed that, in many cases, the above Galois group acts doubly transitively on the set of solutions, and in some cases, it is possible to see that this Galois group is at least the alternating group. Theorem~C extends the results of Sottile and White on the multiple transitivity of Galois group of Schubert calculus on Grassmannians to many new cases. Namely, let $\mu\vdash n$ be a non-hook-shaped and not self-transpose, of the height not greater than $d$ and  let $\lambda^{(0)}$ be the Young diagram obtained as the complement of $\mu$ in the $d\times (2n-d)$ rectangle. 

\medskip

\noindent \textbf{Theorem D.} \emph{Consider the Schubert problem in the Grassmannian $\textrm{Gr}(d,2n)$ of $d$-dimensional planes in the $2n$-dimensional vector space for the collection of partitions $\lambda^{(0)},\lambda^{(1)},\ldots,\lambda^{(n)}$ with $\lambda^{(0)}$ being as above and all other $\lambda^{(i)}=(1)$, i.e. one-box Young diagrams. The Galois group of this Schubert problem is at least the alternating group.}


\subsection{The paper is organized as follows} Section~\ref{se:prelim} gives the preliminaries on the cactus group and its action on Young tableaux following \cite{Chmutov_Glick_Pylyavskyy_2020}. The proof of Theorem~A occupies Sections~\ref{se:hook-and-symmetric}~and~\ref{se:2-trans}: in Section~\ref{se:hook-and-symmetric}, we describe the invariants of pairs of standard Young tableaux which are hook-shaped or self-transpose, and in Section~\ref{se:2-trans}, we prove $2$-transitivity in all other cases. The proof of Theorem~B occupies Sections~\ref{se:3-trans}~and~\ref{se:thm-B}: in Section~\ref{se:3-trans}, we prove $3$-transitivity, and in Section~\ref{se:thm-B}, we exclude all the cases except $A_N$ and $S_N$ by analyzing possible numbers of fixed points for involutions. In Section~\ref{se:sottile}, we prove Theorems~C~and~D. Finally, in Section~\ref{se:further}, we discuss the question of whether $A_N$ or $S_N$ occurs more frequently as the image of the cactus group, presenting partial results and numerical experiments. We summarize the results of the numerical experiments in Appendix A.

\subsection{Acknowledgements} Both authors thank the program MIT PRIMES, under which this project was started. Especially, we thank Tanya Khovanova and Thomas R\"ud for feedback on an early draft of the paper. We thank Yakob Kahane for suggesting the idea and the key reference for Proposition~\ref{pr:|syt|=even}. We are grateful to Joel Kamnitzer for suggetions and for providing us with some computational data, and to Alexander Postnikov for suggestions and references. We thank Son Nguyen, Pavlo Pylyavskyy and Frank Sottile for their valuable comments on the first arXiv version of the paper. L.R. thanks the Courtois Foundation for financial support.

\section{Cactus group and its action on standard Young tableaux}\label{se:prelim}

\subsection{Sch\"utzenberger involutions} 
The \emph{Schützenberger involution} \( \xi \) is an involution on the set $\syt(\lambda)$ of all standard Young tableaux of the shape $\lambda$. 

The algorithm makes use of \emph{jeu de taquin} slides and is as follows  (see, e.g. \cite{LY} for the details):

\begin{enumerate}
    \item Remove the box in the first row and first column of the tableau. Record the value \( m \) of this box.
    \item Perform a \emph{jeu de taquin} slide to fill the empty position.
    \item Place a new box in the last position to be emptied in the slide, labeled \( -m \).
    \item Repeat the process until all of the original boxes of \( T \) have been removed. 
\end{enumerate}

After all boxes of the original tableau have been removed in this way, the result is a tableau of the same shape, but with all negative entries. Adding a constant equal to the number of boxes in \( T \) plus one to each entry will yield the standard tableau \( \xi(T) \).

\subsection{Cactus group action on $\syt(\lambda)$} From the defining relations of $C_n$, it is seen that the involutions $s_{[1i]}$ for $i=2,\ldots,n$ generate $C_n$. The following is well-known (see, e.g. \cite{LY}):

\begin{prop}\label{pr:Schutzenberger}\begin{enumerate}
    \item For any $\lambda\vdash n$, there is an action of $C_n$ on $\syt(\lambda)$ taking $s_{[1i]}$ to the Sch\"utzenberger involution of the subtableau formed by the boxes that contain the numbers from $1$ to $i$.
    \item Under the above $C_n$-action on $\syt(\lambda)$, the involution $s_{[ij]}$ changes only the (skew) subtableau formed by the boxes containing the numbers from $i$ to $j$.
\end{enumerate}
\end{prop}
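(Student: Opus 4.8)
The plan is to define, for every interval $[i,j]\subseteq[1,n]$, an involution $\xi_{[i,j]}$ of $\syt(\lambda)$ and to check that $s_{[i,j]}\mapsto\xi_{[i,j]}$ extends to a homomorphism $\phi\colon C_n\to\mathrm{Sym}(\syt(\lambda))$; both assertions then follow immediately. Given $T\in\syt(\lambda)$, the boxes with entries $1,\dots,i-1$ form a straight Young subdiagram $\nu$ and those with entries $1,\dots,j$ form a straight Young subdiagram $\mu\supseteq\nu$, so the boxes with entries $i,\dots,j$ carry a standard filling of the skew shape $\mu/\nu$. Let $\xi_{[i,j]}(T)$ be the result of applying the Schützenberger involution to this skew subtableau while leaving every other box in place. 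Once the $\xi_{[i,j]}$ are shown to satisfy the three defining relations of $C_n$, the homomorphism $\phi$ exists and sends $s_{[1i]}$ to $\xi_{[1i]}$, the Schützenberger involution on the boxes with entries $1,\dots,i$; this is part~(1). Part~(2) is then the assertion that $\phi(s_{[i,j]})=\xi_{[i,j]}$ alters only the boxes with entries in $[i,j]$, which is immediate from the construction.

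Relations (1) and (2) form the easy, \emph{local} part. Relation~(1), $\xi_{[i,j]}^2=\mathrm{id}$, is just the involutivity of the Schützenberger involution. For relation~(2) the key point is that $\xi_{[i,j]}$ fixes both shapes $\nu$ and $\mu$, since the Schützenberger involution maps $\syt(\mu/\nu)$ to itself; hence $\xi_{[i,j]}$ leaves in place every box whose entry lies outside $[i,j]$ and merely permutes the entries $i,\dots,j$ within the fixed box-set $\mu/\nu$. Therefore, when $[i,j]\cap[k,l]=\emptyset$, the two involutions act on disjoint collections of boxes, and, because each preserves the subdiagram data on which the other depends, they commute. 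This same locality statement is exactly part~(2) of the proposition.

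The heart of the matter is relation~(3): $\xi_{[i,j]}\,\xi_{[k,l]}\,\xi_{[i,j]}=\xi_{[i+j-l,\,i+j-k]}$ whenever $[k,l]\subseteq[i,j]$. Both sides fix all boxes with entries outside $[i,j]$, so I may restrict to the skew subtableau on entries $i,\dots,j$ and, after subtracting $i-1$ from every entry, assume $[i,j]=[1,m]$ with $m=j-i+1$. In these normalized coordinates the reflection $x\mapsto m+1-x$ sends $[k,l]$ to $[m+1-l,\,m+1-k]$, matching the right-hand side of relation~(3), so the claim becomes the fundamental symmetry of evacuation: conjugating the partial evacuation on a sub-interval by the full evacuation reflects that sub-interval. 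I would prove this through Fomin's growth-diagram model of jeu de taquin and evacuation, in which evacuation is realized by a reflection of the growth diagram that carries the operation ``evacuate the entries of $[k,l]$'' precisely to ``evacuate the entries of $[m+1-l,\,m+1-k]$.'' I expect relation~(3) to be the only genuine obstacle, since relations~(1) and~(2) are formal whereas~(3) encodes the real combinatorial symmetry of evacuation; a clean alternative to the growth-diagram computation is to invoke the known fact---in the crystal setting of Henriques and Kamnitzer \cite{HK}, or in the tableau setting of Berenstein--Kirillov \cite{Berenstein_Kirillov_1996} and Chmutov--Glick--Pylyavskyy \cite{Chmutov_Glick_Pylyavskyy_2020}---that the partial Schützenberger involutions satisfy exactly the cactus relations, which specializes to relation~(3) and completes the verification.
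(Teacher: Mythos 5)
The paper does not actually prove this proposition: it is stated as well known and delegated entirely to the citation of \cite{LY} (with the Bender--Knuth description deferred to \cite{Berenstein_Kirillov_1996,Chmutov_Glick_Pylyavskyy_2020}). So your proposal is more ambitious than the paper itself, and its overall architecture --- define partial Sch\"utzenberger involutions $\xi_{[i,j]}$, verify the three cactus relations, read off parts (1) and (2) --- is the standard and correct way to establish the result. Your treatment of relations (1) and (2), and the observation that part (2) of the proposition is just the locality built into the definition of $\xi_{[i,j]}$, are fine.

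However, as a self-contained proof the proposal has two gaps. First, the crux, relation (3) for nested intervals, is not actually carried out: you name the growth-diagram strategy and assert that evacuation corresponds to a reflection of the growth diagram carrying ``evacuate $[k,l]$'' to ``evacuate $[m+1-l,m+1-k]$,'' but this is precisely the nontrivial combinatorial content of the proposition, and your fallback is to cite the same sources the paper cites --- which makes the argument circular as a proof and reduces it to the paper's own (citation-only) treatment. Second, there is an unaddressed definitional subtlety: the jeu-de-taquin evacuation algorithm described in Section 2 applies to straight shapes, whereas your $\xi_{[i,j]}$ for $i>1$ acts on a \emph{skew} subtableau, where ``the Sch\"utzenberger involution'' must be taken to mean the reversal operation (or must be defined as the conjugate $\xi_{[1,j]}\xi_{[1,j-i+1]}\xi_{[1,j]}$, after which relation (3) for general nested intervals reduces to relations among the straight-shape evacuations $\xi_{[1,i]}$ alone). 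A cleaner route, consistent with the fact that the $s_{[1,i]}$ generate $C_n$, is to define only the straight-shape evacuations and verify the induced presentation; with your current setup you would additionally need to check that the directly-defined skew $\xi_{[i,j]}$ agrees with this conjugate, which you do not address.
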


\subsection{Bender--Knuth Involutions}
For any $i=1,\ldots,n-1$, the \emph{Bender-Knuth involution} $t_i$ on $\syt(\lambda)$ interchanges the occurrences of $i$ and $i+1$ the resulting table is a valid standard Young tableau (and does nothing otherwise).

There is an explicit correspondence between Sch\"utzenberger involutions and Bender–Knuth involutions, given by the identity
$$s_{[1,i]}=t_1(t_2t_1)(t_3t_2t_1)\cdots (t_i\cdots t_1).$$ This relation is described in ~\cite{Berenstein_Kirillov_1996,Chmutov_Glick_Pylyavskyy_2020}, and it can be checked that the Bender--Knuth involutions generate the image of $C_n$ in the permutation group of $\syt(\lambda)$. 

\subsection{Notation}

A partition $\lambda$ of $n$ is written as $(a_1,a_2,\cdots,a_k)$ where $a_1\geq a_2\geq \cdots \geq a_k$ and $a_1+a_2+\cdots+a_k=n$. We denote by $r_i(\lambda)$ the number of boxes in row $i$ and by $c_j(\lambda)$ the number of boxes in column $j$ of the Young diagram of $\lambda$. Let $\mathcal{I}:=\{(i,j)\mid i\in [1,c_1(\lambda)], j\in [1,r_i(\lambda)]\}$ be the set of indices of $\lambda$. We view an element $A$ of $\syt(\lambda)$ as a bijection $A:\mathcal{I}\rightarrow[1,n]$ where for all $(i_1,j_1)\neq(i_2,j_2)\in \mathcal{I}$ with $i_1\leq i_2$ and $j_1\leq j_2$, we have $A(i_1,j_1)<A(i_2,j_2)$.

We call a box $(i,j)$ of $\lambda$ a \emph{corner} if there exists a standard Young tableau $A$ of this shape such that $A^{-1}(n)=(i,j)$. We additionally call $(i,j)$ an \emph{extended corner} if there exists a standard Young tableau $A$ of shape $\lambda$ such that $A^{-1}(n)=(i,j)$ and either $A^{-1}(n-1)=(i-1,j)$ or $A^{-1}(n-1)=(i,j-1)$. If $(a,b)$ is a corner of $\lambda$, we denote $\lambda\setminus (a,b)$ as $\lambda$ with the box at $(a,b)$ removed, a partition of $n-1$. We also define $A\setminus n$ to be the standard Young tableau of $\lambda\setminus A^{-1}(n)$ such that $(A\setminus n)^{-1}(k)=A^{-1}(k)$ for all $k\in[1,n-1]$.

Finally, we denote by $\lambda^T$ the transpose partition of $\lambda$, and $A^T\in\syt(\lambda^T)$ the transpose tableau of $A$.

\subsection{Single tableaux are transitive}
\begin{thm} \label{1-trans}
    Let $\lambda$ be a partition of $n$. Then, $\syt(\lambda)$ is transitive under $C_n$.
\end{thm}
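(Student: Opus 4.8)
The plan is to induct on $n$, using the Bender--Knuth presentation of the image of $C_n$. The base case $n=1$ is trivial, since $\syt(\lambda)$ is then a single tableau. For the inductive step I assume that $C_{n-1}$ acts transitively on $\syt(\mu)$ for every $\mu\vdash n-1$, and I fix once and for all a corner $c_0$ of $\lambda$. The argument splits into two reductions. First, I observe that the subgroup of $C_n$ generated by the $s_{[1,i]}$ for $i\le n-1$ is a copy of $C_{n-1}$, and by Proposition~\ref{pr:Schutzenberger}(2) each such generator alters only the boxes containing $1,\dots,i\le n-1$; hence this subgroup fixes the position of the box containing $n$ and restricts, on the subtableau $A\setminus n$, to exactly the $C_{n-1}$-action on $\syt(\lambda\setminus A^{-1}(n))$. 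Consequently, once two tableaux have $n$ in the same corner, the induction hypothesis connects them. So it suffices to prove that the $C_n$-orbit of an arbitrary $A\in\syt(\lambda)$ contains a tableau with $n$ placed at $c_0$; equivalently, that $n$ can be relocated to any prescribed corner.

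This relocation is the heart of the matter, and I would handle it as follows. Let $c_1=A^{-1}(n)$ and let $c_2$ be any other corner of $\lambda$. Since distinct corners lie in distinct rows and distinct columns, $c_2$ remains a corner of $\lambda\setminus c_1$, and there exists some $B\in\syt(\lambda\setminus c_1)$ with $B^{-1}(n-1)=c_2$. Using the copy of $C_{n-1}$ above (which keeps $n$ fixed at $c_1$) together with the induction hypothesis, I move $A$ to a tableau $\tilde A$ with $n$ at $c_1$ and $n-1$ at $c_2$. Because $c_1$ and $c_2$ lie in different rows and different columns, the entries $n-1$ and $n$ are neither row- nor column-adjacent, so the Bender--Knuth involution $t_{n-1}$ genuinely swaps them; the result is standard, since the maximum $n$ then sits at the corner $c_2$ and $n-1$ at the corner $c_1$ has only smaller entries above it and to its left. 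Thus $t_{n-1}\tilde A$ has $n$ at $c_2$, completing the relocation.

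Combining the two reductions gives transitivity: relocate $n$ to $c_0$, then apply the $C_{n-1}$ induction to the remaining subtableau. I expect the main obstacle to be the relocation step, specifically verifying that the single involution $t_{n-1}$ legitimately fires and preserves standardness; the key simplification is that any two distinct corners automatically occupy distinct rows and columns, so no chaining through intermediate corners is needed and the swap is always admissible. A secondary point requiring care is the identification of the subgroup $\langle s_{[1,i]} : i\le n-1\rangle$ with $C_{n-1}$ and the verification that its action on $\syt(\lambda)$ descends to the standard action on $\syt(\lambda\setminus c_0)$, which is exactly where Proposition~\ref{pr:Schutzenberger}(2) and the relation $s_{[1,i]}=t_1(t_2t_1)\cdots(t_i\cdots t_1)$ are used.
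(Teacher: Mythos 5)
Your proof is correct, but it takes a genuinely different route from the paper. The paper's argument is non-inductive and completely explicit: it fixes the canonical row-reading tableau $T$ (with $T(i,j)=j+r_1(\lambda)+\cdots+r_{i-1}(\lambda)$) and sorts an arbitrary $A$ into $T$ greedily, locating the lexicographically first box where $A$ and $T$ disagree and applying the descending chain $t_{T(i,j)}\cdots t_{A(i,j)-2}t_{A(i,j)-1}$ to drag the correct value into that box without disturbing the earlier ones; transitivity follows because every tableau reaches the same canonical form. You instead induct on $n$: you use the fact that $\langle s_{[1,i]}:i\le n-1\rangle$ acts only on the entries $1,\dots,n-1$ (so it realizes the full $C_{n-1}$-action on $A\setminus n$ while pinning the box containing $n$), and you relocate $n$ between corners by first steering $n-1$ into the target corner and then firing $t_{n-1}$, the swap being legal because distinct corners occupy distinct rows and columns. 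Both arguments are sound. Your approach has the virtue of being structurally identical to the paper's own later inductive proofs of $2$- and $3$-transitivity (Theorem~\ref{2-trans} and Theorem~\ref{th:3-trans}), which also reduce to placing $n$ in a prescribed corner via $t_{n-1}$ and recursing on $A\setminus n$; the paper's proof of this particular statement is shorter, avoids any appeal to the subgroup structure of $C_n$, and produces an explicit word in the Bender--Knuth generators. One small caveat: you do not actually need the subgroup $\langle s_{[1,i]}:i\le n-1\rangle$ to be literally isomorphic to $C_{n-1}$ (which would require a separate argument); all your proof uses is that the $C_{n-1}$-action on $\syt(\lambda\setminus c_1)$ is realized by elements of $C_n$ fixing the position of $n$, which follows directly from Proposition~\ref{pr:Schutzenberger}.
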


\begin{proof}
    Let $A\in\syt(\lambda)$. It suffices to show there exists some $g\in C_n$ such that $g(A)=T$, where $T$ is the fixed tableau with $T(i,j)=j+r_1(\lambda)+r_2(\lambda)+\cdots +r_{i-1}(\lambda)$. Suppose $(i,j)$ be the first coordinate (ordered lexicographically) where $A(i,j)\neq T(i,j)$. Then we can always find some $h\in C_n$ such that the first coordinate that $h(A)$ and $T$ do not agree on is greater than $(i,j)$ (or $h(A)=T$). Observe that $A(i,j)>T(i,j)$ and so $t_{T(i,j)}\cdots t_{A(i,j)-2}t_{A(i,j)-1}(A)$ agrees with $T$ at the coordinate $(i,j)$, but leaves all previous coordinates untouched, so either $h(A)=T$ or the first coordinate they do not agree on must be greater than $(i,j)$).
\end{proof}

Pairs of standard Young tableaux are transitive for all shapes except hook-shaped and self-transpose partitions. In these cases, they are transitive with the exception of a few obvious invariants, which we describe in the next section.

\section{Hook-shaped tableaux and self-transpose tableaux}\label{se:hook-and-symmetric}
\subsection{Hook-shaped tableaux}

\begin{defn*}
     A partition $\lambda$ is \emph{hook-shaped} if it does not include a box at (2,2).
\end{defn*}

\begin{example} \label{ex: H, AH}
    The partition on the left is hook-shaped, whereas the partition on the right is not since it contains the box at $(2,2)$.
    \begin{center}
    $$\begin{ytableau}
        \color{white}0 &  &  & & \\
         \\
        \\
        \color{white}0
    \end{ytableau}\quad\quad\quad \quad \begin{ytableau}
        \color{white}0 &  &  & & \\
         & \\
        \\
        \color{white}0
    \end{ytableau}$$
    \end{center}
\end{example}
\vspace{0.25cm}

Throughout this section, assume $\lambda_1$ and $\lambda_2$ are hook-shaped partitions with $A\in\syt(\lambda_1)$ and $B\in\syt(\lambda_2)$.

\begin{defn*}
    We denote by $S_{A,B}=\{k \mid A^{-1}(k)=(1,a), B^{-1}(k)=(1,b) \text{ for some }a,b\}$ the set of shared values in the first rows of $A$ and $B$.
\end{defn*}

\begin{lemma} \label{invariant}
    The value $|S_{A,B}|$ is invariant under $C_n$.
\end{lemma}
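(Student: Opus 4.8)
The plan is to reduce the statement to checking invariance under a single Bender--Knuth involution applied simultaneously to both tableaux, and then to track only the two values that such an involution can move. Since each generator $s_{[1,i]}$ of $C_n$ acts on both $\syt(\lambda_1)$ and $\syt(\lambda_2)$ through the same word $t_1(t_2t_1)\cdots(t_i\cdots t_1)$ in Bender--Knuth involutions, and these generate the image of $C_n$, it suffices to prove that for every $m$ the quantity $|S_{A,B}|$ is unchanged when $t_m$ is applied to both $A$ and $B$ at once (the diagonal action being a product of such letters). To make the bookkeeping clean I would encode first-row membership by the indicator $\chi_A(k)$ equal to $1$ if $A^{-1}(k)$ lies in the first row and $0$ otherwise; this is well defined because in a hook every box lies in the first row or the first column, and $\chi_A(1)=1$ always since $A^{-1}(1)=(1,1)$. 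Then $|S_{A,B}|=\sum_{k=1}^{n}\chi_A(k)\chi_B(k)$, and since $t_m$ only ever moves the values $m$ and $m+1$, the single term that can change is $\chi_A(m)\chi_B(m)+\chi_A(m+1)\chi_B(m+1)$.

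Next I would record precisely when $t_m$ acts nontrivially on a hook tableau. For $m\ge 2$ neither $m$ nor $m+1$ occupies the corner $(1,1)$, so each lies either in the \emph{arm} (first row, column $\ge 2$) or the \emph{leg} (first column, row $\ge 2$); two such boxes are comparable exactly when both lie in the arm or both in the leg, and incomparable exactly when one lies in the arm and the other in the leg. Hence $t_m$ swaps $m$ and $m+1$ in $A$ if and only if $\chi_A(m)\neq\chi_A(m+1)$, and in that event the swap simply interchanges the two indicator values $\chi_A(m)$ and $\chi_A(m+1)$. The boundary case $m=1$ is trivial, since $t_1$ cannot move the corner value $1$ and therefore acts as the identity.

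Finally I would run the short case analysis on whether $t_m$ swaps in $A$, in $B$, in both, or in neither. If it swaps in both, the partial sum is preserved because both pairs of indicators are interchanged; if it swaps in neither, nothing moves. The genuinely informative case, and the one I expect to be the only real obstacle, is when $t_m$ swaps in exactly one tableau, say in $A$ but not in $B$: then $\chi_B(m)=\chi_B(m+1)=:b$, so the partial sum equals $b\,(\chi_A(m)+\chi_A(m+1))$, which is symmetric in $\chi_A(m)$ and $\chi_A(m+1)$ and hence unaffected by interchanging them. In every case the partial sum, and therefore $|S_{A,B}|$, is unchanged, which completes the reduction and the proof. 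The only points requiring care are this asymmetric case and the boundary behavior at the corner value $1$; everything else is immediate from the indicator reformulation.
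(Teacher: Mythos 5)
Your proof is correct and follows essentially the same route as the paper: reduce to checking invariance under a single Bender--Knuth involution applied to both tableaux, then split into cases according to whether $t_m$ acts nontrivially on $A$, on $B$, on both, or on neither, with the asymmetric case being the only one needing a real observation. The indicator-function bookkeeping is a cleaner way to organize the same case analysis, not a different argument.
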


\begin{proof}
    It suffices to show that $I:=|S_{A,B}|$ is invariant under any $t_k$. If $A$ and $B$ are both fixed under $t_k$, clearly $I$ is fixed. If neither $A$ nor $B$ are fixed under $t_k$, then it must be the case that $k$ and $k+1$ are swapped between the first row and the first column: If either $k$ or $k+1$ are shared in the first row, then which of $k$ or $k+1$ is shared is simply swapped in $t_k(A)$ and $t_k(B)$; if neither $k$ nor $k+1$ are shared in the first row, then $t_k$ preserves that neither are shared. If $A$ is fixed under $t_k$ and $B$ is not, then either $k$ and $k+1$ are both in the first row of $A$ or both in the first column of $A$. In the former case, $t_k$ maintains that one of $k$ or $k+1$ is shared in the first row, and in the latter case, $t_k$ maintains that neither are shared in the first row. The proof is symmetric for if $B$ is fixed and $A$ is not.
\end{proof}

\begin{lemma} \label{inc shared in hook}
    The value $m(A,B):=\max(S_{A,B})$ exists and is not equal to $n$ if and only if there exists some $g\in C_n$ such that $m(g(A),g(B))>m(A,B)$.
\end{lemma}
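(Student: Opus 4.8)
The plan is to translate everything into first-row entry sets, where the cactus action becomes transparent. Since $\lambda_1$ and $\lambda_2$ are hooks, a tableau $A\in\syt(\lambda_1)$ is determined by the set $R_A\subseteq[1,n]$ of values occupying its first row: the corner forces $1\in R_A$, one then places the chosen $r_1(\lambda_1)-1$ elements of $\{2,\dots,n\}$ in increasing order along the first row and the remaining values in increasing order down the first column, and every such choice yields a valid tableau. Under this dictionary $S_{A,B}=R_A\cap R_B$ and $m(A,B)=\max(R_A\cap R_B)$. The first thing I would record is how a Bender--Knuth involution $t_k$ acts on $R_A$: for $k\ge 2$ the boxes of $k$ and $k+1$ both avoid the corner, so ``$k,k+1$ in the same row'' means both lie in $R_A$ and ``same column'' means both lie outside $R_A$; hence $t_k$ acts on $R_A$ exactly as the transposition $(k,\,k+1)$, simultaneously on $R_A$ and $R_B$ because $C_n$ acts diagonally on the pair.

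The reverse implication is then immediate. If some $g\in C_n$ satisfies $m(g(A),g(B))>m(A,B)$, then $m(A,B)$ is in particular defined --- which it always is, since $1\in R_A\cap R_B$ --- and $m(A,B)<m(g(A),g(B))\le n$, so $m(A,B)\ne n$.

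For the forward implication I would take $g=t_m$. Because $m=m(A,B)$ is shared, $m$ lies in the first row of both $A$ and $B$, and because $m\ne n$ the value $m+1\le n$ exists and $t_m$ is a legitimate generator. Applying $t_m$ to each tableau, the box of $m+1$ is either already in the first row --- in which case $m$ and $m+1$ share a row and $t_m$ fixes that tableau --- or it lies in the first column, in which case $m$ (first row) and $m+1$ (first column) occupy incomparable boxes and $t_m$ interchanges their values, moving $m+1$ into the first row. In both situations $m+1$ ends up in the first row of $t_m(A)$ and of $t_m(B)$, so $m+1\in S_{t_m(A),t_m(B)}$ and $m(t_m(A),t_m(B))\ge m+1>m$. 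On the level of sets this is simply the statement that the transposition $(m,m+1)$ sends each of $R_A$ and $R_B$ to a set still containing $m+1$.

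The delicate point --- and the step I expect to be the main obstacle --- is the corner box $(1,1)$. The generator $t_1$ can never move the value $2$, since $1$ is pinned at $(1,1)$ and $2$ always shares a row or column with it; equivalently, on first-row sets $t_1$ acts as the identity rather than as the transposition $(1,2)$. Thus the increment $g=t_m$ does its job only once $m\ge 2$, i.e.\ once $S_{A,B}$ strictly contains the corner. I would therefore verify that $m=1$ is the sole exceptional configuration: when $S_{A,B}=\{1\}$, the cardinality $|S_{A,B}|=1$ is preserved by every $t_k$ (Lemma~\ref{invariant}), so no $g$ can enlarge the shared set and $m$ stays pinned at $1$. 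Checking that this degenerate case is compatible with the statement, and that the increment $m\mapsto m+1$ is genuinely available throughout the remaining range $2\le m<n$, is where I expect the argument to require the most care.
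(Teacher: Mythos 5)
Your argument is essentially the paper's: both take $g=t_m$ with $m=\max(S_{A,B})$ and check that $m+1$ lands in the first row of both tableaux, splitting into the same two cases ($m+1$ in the first column of both, or in the first row of exactly one of them), and both dispose of the reverse direction as immediate. Your dictionary between hook tableaux and first-row subsets is a clean way to phrase it but is not a different method.

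The point you flag at the end and leave unresolved is not a failure of your argument --- it is a genuine defect of the lemma as stated, which the paper's own proof silently ignores. Since $1=A(1,1)$ always lies in $S_{A,B}$, the maximum always exists; and when $S_{A,B}=\{1\}$ with $n\ge 2$ the left-hand side of the equivalence holds while the right-hand side fails, for exactly the reasons you give: $t_1$ acts trivially on every standard tableau, $1\in S_{g(A),g(B)}$ for every $g$, and $|S_{A,B}|=1$ is preserved by Lemma~\ref{invariant}, so $m$ is pinned at $1$ forever. (Concretely, take $\lambda_1=\lambda_2=(2,1)$ and $A\ne B$: the only nontrivial generator $t_2$ just exchanges the two tableaux, and $S$ remains $\{1\}$.) So the compatibility you hoped to verify does not hold; the statement needs the extra hypothesis $m(A,B)\ge 2$ (equivalently $|S_{A,B}|\ge 2$). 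This is harmless for the paper, since the lemma is only invoked in Proposition~\ref{2-trans hook} after reducing to $I_r>1$. With that hypothesis added, your proof is complete and coincides with the paper's.
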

\begin{proof}
The reverse direction follows directly. Now, let $k=m(A,B)$. Clearly, $k+1$ is not in the first row of both $A$ and $B$. If $k+1$ is not in the first row of either, then $m(t_k(A),t_k(B))=k+1$, since $k+1$ is swapped to the first row of both. Otherwise, if $k+1$ is in the first row of either $A$ or $B$ but not the other, then $t_k$ leaves fixed the standard Young tableau with $k$ and $k+1$ in the first row and swaps $k+1$ to the first row of the other, and hence, $m(t_k(A),t_k(B))=k+1$.
\end{proof}
We are now in a position to describe the orbits of pairs standard Young tableaux of hook-shaped partitions under $C_n$.

\begin{prop} \label{2-trans hook}
    There are $\min(r_1(\lambda_1), c_1(\lambda_1), r_1(\lambda_2), c_1(\lambda_2))$ orbits of $C_n$ on the set of pairs $\syt(\lambda_1) \times \syt(\lambda_2)$.
\end{prop}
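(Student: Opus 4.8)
The plan is to encode a pair $(A,B)\in\syt(\lambda_1)\times\syt(\lambda_2)$ as a single word and then to recognize the Bender--Knuth action as a symmetric group action on that word. Since each $\lambda_i$ is hook-shaped, a tableau is determined by the set of values in its first row, so I encode the pair by a word $w=w_1\cdots w_n$ over the four-letter alphabet $\{a,b,c,d\}$: set $w_k=a$ if $k$ lies in the first row of both $A$ and $B$, $w_k=b$ if $k$ lies in the first row of $A$ only, $w_k=c$ if in the first row of $B$ only, and $w_k=d$ if $k$ lies in the first column of both. The value $1$ always sits at the corner of each tableau, so $w_1=a$ in every case, and by construction $|S_{A,B}|$ equals the number of letters equal to $a$ in $w$.

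The key computation is that for every $k\ge 2$ the involution $t_k$ acts on $w$ as the transposition of the letters in positions $k$ and $k+1$, while $t_1$ acts trivially. Indeed, for a hook the swap of the values $k$ and $k+1$ in $A$ is legal precisely when one of them lies in the first row and the other in the first column, i.e. precisely when the first-row memberships recorded in the $A$-coordinates of $w_k$ and $w_{k+1}$ differ; the same holds for $B$ in its own coordinate, and the two coordinates are acted on independently. Checking the six unordered pairs of distinct letters shows that the simultaneous swaps on the two coordinates always produce the full transposition $w_k\leftrightarrow w_{k+1}$. For $k=1$ the swap would move the value $1$ off a corner, which is never standard, so $t_1$ is the identity. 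Since $t_2,\dots,t_{n-1}$ are the adjacent transpositions generating the symmetric group on the positions $\{2,\dots,n\}$, and the Bender--Knuth involutions generate the image of $C_n$, the $C_n$-action on words is exactly the permutation of the positions $2,\dots,n$, with $w_1=a$ fixed.

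Consequently, two pairs lie in the same $C_n$-orbit if and only if their words have the same multiset of letters, i.e. the same counts $(\#a,\#b,\#c,\#d)$. These counts satisfy $\#a+\#b=r_1(\lambda_1)$, $\#a+\#c=r_1(\lambda_2)$, and $\#a+\#b+\#c+\#d=n$, so for fixed shapes they are determined by the single number $\#a=|S_{A,B}|$; this reproves Lemma~\ref{invariant} and shows that $|S_{A,B}|$ is a complete invariant. The orbits are therefore indexed by the admissible values of $|S_{A,B}|$: the constraints $\#a\ge 1$ and $\#b,\#c,\#d\ge 0$ give $\max(1,\,r_1(\lambda_1)+r_1(\lambda_2)-n)\le |S_{A,B}|\le \min(r_1(\lambda_1),r_1(\lambda_2))$, and every tuple in this range is realized by an actual pair of tableaux.

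It remains to identify the number of integers in this range with $\min(r_1(\lambda_1),c_1(\lambda_1),r_1(\lambda_2),c_1(\lambda_2))$. Using $c_1(\lambda_i)=n-r_1(\lambda_i)+1$, this is the routine arithmetic check I expect to be the only fiddly step: assuming without loss of generality $r_1(\lambda_1)\le r_1(\lambda_2)$ and splitting into the cases $r_1(\lambda_1)+r_1(\lambda_2)\le n+1$ and $>n+1$, the count of admissible values equals $r_1(\lambda_1)$ in the first case and $n-r_1(\lambda_2)+1=c_1(\lambda_2)$ in the second, each matching the claimed minimum. The main conceptual obstacle is the identification of $t_k$ with a genuine transposition of the four-letter word; once that is established, both the completeness of the invariant $|S_{A,B}|$ and the orbit count follow immediately.
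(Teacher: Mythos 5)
Your proof is correct, and it takes a genuinely different route from the paper's. The paper proves that $|S_{A,B}|$ is invariant by direct casework on each $t_k$ (Lemma~\ref{invariant}), shows that $\max(S_{A,B})$ can be pushed up to $n$ (Lemma~\ref{inc shared in hook}), and then runs an induction on $n$, normalizing the position of $n$ in both tableaux, deleting it, and appealing to the inductive hypothesis to conclude that the number of shared first-row entries is a complete invariant. You instead linearize the whole problem at once: since a hook SYT is freely determined by the $(r_1-1)$-subset of $\{2,\dots,n\}$ sitting in its first row, the pair is faithfully encoded by your four-letter word, and the key observation --- that for a hook the Bender--Knuth swap at $k\ge 2$ is legal exactly when the row/column memberships of $k$ and $k+1$ differ, so that ``swap or do nothing'' is in both cases the transposition of the letters $w_k$ and $w_{k+1}$, while $t_1$ is trivial because $1$ is pinned at $(1,1)$ --- identifies the $C_n$-action with the full symmetric group on positions $2,\dots,n$. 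This makes the orbit classification immediate (orbits are letter multisets, determined by $\#a=|S_{A,B}|$), reproves Lemma~\ref{invariant} as a byproduct, avoids the induction and its normalization step entirely, and additionally yields the orbit sizes of the subsequent Corollary as multinomial coefficients. The points that deserve an explicit sentence in a written-up version are exactly the ones you flag: that two consecutive values lying both in the first row (or both in the first column) of a hook necessarily occupy adjacent boxes, so $t_k$ genuinely fixes such a tableau; that every admissible value of $\#a$ is realized (clear, since any word with the right letter counts and $w_1=a$ comes from a pair); and the final arithmetic via $c_1(\lambda_i)=n-r_1(\lambda_i)+1$, which does reduce to the two cases you describe and matches the claimed minimum in each.
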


\begin{proof}
We proceed by induction. Suppose without loss of generality that $r_1(\lambda_1)\leq c_1(\lambda_1),$ $ r_1(\lambda_2),c_1(\lambda_2)$. Let $I_{r}$ denote the number of shared values in the first row of $A$ and the first row of $B$, and let $I_{c}$ denote the number of shared values in the first row of $A$ and the first column of $B$. Observe that $I_r$ and $I_c$ are both invariants by Lemma~\ref{invariant} with $I_r+I_c=r_1(\lambda_1)$. Also, there exist pairs of standard Young tableaux for which $I_r$ and $I_c$ attain values from 1 to $r_1(\lambda_1)$. It suffices to show that $I_r$ is a complete invariant (i.e., given a fixed value of $I_r$, any two pairs of standard Young tableaux in $\syt(\lambda_1) \times \syt(\lambda_2)$ with this value are in the same orbit).

Without loss of generality, we assume $I_r>1$ (otherwise, we do a symmetric proof on the row of $A$ and the column of $B$, as $I_c>1$). If $r_1(\lambda_1)=1$, then observe that $A$ is fixed under any action of $C_n$, and so by Theorem~\ref{1-trans}, there is indeed 1 orbit. Now, suppose $r_1(\lambda_1)>1$. By Lemma~\ref{inc shared in hook}, there exists $g\in C_n$ such that $(g(A))^{-1}(n)=(1,r_1(\lambda_1))$ and $(g(B))^{-1}(n)=(1,r_1(\lambda_2))$. Observe that the number of values shared in the first row of $A\setminus n$ and the first row of $B\setminus n$ is $I_r-1$, and since this is a complete invariant for the shapes $\lambda_1\setminus (1,r_1(\lambda_1))$ and $\lambda_2\setminus (1,r_1(\lambda_2))$, we have that $I_r$ is also a complete invariant.
\end{proof}

When $\lambda_1=\lambda_2$, we can deduce the sizes of the orbits purely combinatorially, since the orbits are defined by the number of shared entries in the first rows of both standard Young tableaux:

\medskip

\begin{cor}
     Hook-shaped tableaux are not 2-transitive, but the number of entries shared in the first row of the two tableaux is a complete invariant. Let $\lambda$ be a hook-shaped partition of $n$. Then the lengths of the orbits of pairs of elements of $\syt(\lambda)$ under the group $C_n$ are $N\cdot {k-1\choose 0}{l-1\choose 0},N\cdot{k-1\choose 1}{l-1\choose 1},\ldots,N\cdot{k-1\choose k-1}{l-1\choose l-k}$, where $k:=\min(r_1(\lambda),c_1(\lambda))$, $l:=\max(r_1(\lambda),c_1(\lambda))$, and $N:=|\syt(\lambda)|$.
\end{cor}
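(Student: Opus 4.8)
The plan is to combine the structural result of Proposition~\ref{2-trans hook} (with $\lambda_1=\lambda_2=\lambda$, so that there are exactly $k:=\min(r_1(\lambda),c_1(\lambda))$ orbits indexed by the complete invariant $I_r=|S_{A,B}|$) with a purely enumerative count of how many pairs $(A,B)$ share a prescribed number of entries in their first rows. Since $|S_{A,B}|$ is a complete invariant taking each value in $\{1,2,\ldots,k\}$, the orbit corresponding to the value $I_r=i$ consists of exactly those pairs $(A,B)$ with $|S_{A,B}|=i$, and it suffices to count these pairs.

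First I would fix a convenient description of a hook-shaped standard tableau. A standard Young tableau $A$ of a hook shape with $r_1(\lambda)$ boxes in the first row and $c_1(\lambda)$ boxes in the first column is determined entirely by which entries of $[1,n]$ occupy the first row: the box $(1,1)$ always contains $1$, and the remaining first-row entries form a subset $R_A\subseteq\{2,\ldots,n\}$ of size $r_1(\lambda)-1$, with the complementary entries filling the first column in increasing order. Thus $\syt(\lambda)$ is in bijection with $(r_1(\lambda)-1)$-subsets of $\{2,\ldots,n\}$, and indeed $N=|\syt(\lambda)|=\binom{n-1}{r_1(\lambda)-1}$. Under this bijection, $S_{A,B}=(R_A\cap R_B)\cup\{1\}$ (the entry $1$ is always shared), so $|S_{A,B}|=|R_A\cap R_B|+1$.

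Next I would perform the count. By symmetry of the formula in $r_1$ and $c_1$, write $k=\min(r_1,c_1)$ and $l=\max(r_1,c_1)$, so one first-row size is $k$ and the relevant subset sizes are $k-1$ and $l-1$ from the $(n-1)$-element ground set $\{2,\ldots,n\}$ where $n-1=(k-1)+(l-1)$. To count ordered pairs $(A,B)$ with $|R_A\cap R_B|=i-1$, I fix $A$ (there are $N$ choices, as the count is clearly homogeneous in $A$ by transitivity from Theorem~\ref{1-trans}), and then count $B$ whose defining subset meets $R_A$ in exactly $i-1$ elements: choosing $i-1$ of the shared elements from $R_A$ and the rest of $R_B$ from the complement gives $\binom{k-1}{i-1}\binom{l-1}{(k-1)-(i-1)}$ choices, matching the stated orbit length $N\cdot\binom{k-1}{i-1}\binom{l-1}{l-k+\,(i-1)-(k-1)}$; rewriting the second binomial via $\binom{l-1}{(k-1)-(i-1)}=\binom{l-1}{l-k+(i-1)}$ (and noting the exponents in the statement run $i-1=0,1,\ldots,k-1$) reproduces the sequence $N\binom{k-1}{0}\binom{l-1}{0},\,N\binom{k-1}{1}\binom{l-1}{1},\,\ldots,\,N\binom{k-1}{k-1}\binom{l-1}{l-k}$ exactly.

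The main obstacle is not any deep argument but bookkeeping: I must verify that the second binomial coefficient in the statement is the correct reindexing of the naive intersection count, and in particular confirm that the listed orbits correspond to $I_r$ ranging over $1,\ldots,k$ rather than $0,\ldots,k-1$. One must be careful that the box $(1,1)$ forces $1\in S_{A,B}$ always, which shifts $|S_{A,B}|$ up by one relative to $|R_A\cap R_B|$; this is why the smallest orbit corresponds to $\binom{k-1}{0}\binom{l-1}{0}$ (disjoint first-row tails, i.e.\ $I_r=1$) rather than to a genuinely empty shared set. Once the indexing conventions are pinned down, the proof is just the observation that the orbit indexed by value $i$ has size $N$ times the number of $B$ sharing exactly $i-1$ non-initial entries with a fixed $A$, which is the product of binomials above. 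I would finish by remarking that summing these orbit lengths recovers $\sum_{i}\binom{k-1}{i-1}\binom{l-1}{(k-1)-(i-1)}=\binom{n-1}{k-1}$ by Vandermonde, so the orbits partition $\syt(\lambda)^2$ into $N^2$ pairs as a consistency check.
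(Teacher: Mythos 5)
Your proof is correct and is exactly the argument the paper intends (the paper offers no written proof of this corollary, only the preceding remark that the orbit sizes follow ``purely combinatorially'' from Proposition~\ref{2-trans hook}): identify a hook tableau with the $(r_1(\lambda)-1)$-subset of $\{2,\ldots,n\}$ filling the tail of its first row, observe $|S_{A,B}|=|R_A\cap R_B|+1$, and count pairs with prescribed intersection, checking the total against $N^2$ via Vandermonde. One bookkeeping slip in your aside: the orbit of length $N\binom{k-1}{0}\binom{l-1}{0}=N$ is the diagonal orbit $A=B$ (maximal sharing), not the disjoint-tails orbit with $I_r=1$; indeed your own general formula assigns the $I_r=1$ orbit the length $N\binom{k-1}{0}\binom{l-1}{l-k}$, so your sequence is the stated one read in reverse order (the two agree term by term after the substitution $m=k-1-(i-1)$ and the symmetry $\binom{l-1}{k-1-j}=\binom{l-1}{l-k+j}$). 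Since the corollary asserts only the multiset of orbit lengths, this does not affect correctness.
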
 

\subsection{Self-transpose tableaux}
\begin{defn*}
    A partition $\lambda$ is \emph{self-transpose} if it is fixed under transposition (in other words, $\lambda=\lambda^T$).
\end{defn*}

\begin{prop} \label{prop: self-transpose invariant}
    Let $\lambda$ be a self-transpose partition of $n$. The set of pairs $\syt(\lambda)\times \syt(\lambda)$ has at least 3 orbits under $C_n$.
\end{prop}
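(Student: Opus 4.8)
The plan is to build everything on the observation that the transpose operation $A\mapsto A^{T}$ is $C_n$-equivariant, and then to split $\syt(\lambda)\times\syt(\lambda)$ into three $C_n$-stable pieces according to how $A$ and $B$ are related under transposition. The key step is the equivariance lemma: for every $g\in C_n$ and every $A\in\syt(\lambda)$ one has $g(A^{T})=(g(A))^{T}$. Since the image of $C_n$ in the permutation group of $\syt(\lambda)$ is generated by the Bender--Knuth involutions $t_1,\dots,t_{n-1}$, it suffices to check $t_i(A^{T})=(t_i(A))^{T}$. The involution $t_i$ acts nontrivially on $A$ precisely when the boxes containing $i$ and $i+1$ lie in neither a common row nor a common column; this condition is symmetric under transposition, and when it holds the swap of $i$ and $i+1$ commutes with relabelling boxes by their transposes (when it fails, both sides equal $A^{T}$). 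As $\lambda=\lambda^{T}$, we have $A^{T}\in\syt(\lambda)$, so this is well-defined.

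Granting the lemma, the two conditions ``$A=B$'' and ``$A=B^{T}$'' are each preserved by the $C_n$-action: the first because $g$ is a bijection, the second because $g(B)^{T}=g(B^{T})$. Hence the three subsets
$$P_0=\{(A,B):A=B\},\qquad P_1=\{(A,B):B=A^{T}\neq A\},\qquad P_2=\{(A,B):B\notin\{A,A^{T}\}\}$$
are each unions of $C_n$-orbits, and they partition $\syt(\lambda)\times\syt(\lambda)$. It therefore suffices to show that all three are nonempty, since they then account for at least three distinct orbits.

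To populate them I would exhibit explicit representatives. $P_0$ is nonempty trivially (and is in fact a single orbit by Theorem~\ref{1-trans}). For $P_1$ I would take the row-reading superstandard tableau $A$, filling $1,2,\dots$ across successive rows; its transpose is the column-reading superstandard tableau, and for self-transpose $\lambda$ these two coincide only when $\lambda$ has a single row or column, so $A\neq A^{T}$ and $(A,A^{T})\in P_1$. For $P_2$ I would pick any third tableau $B\notin\{A,A^{T}\}$, giving $(A,B)\in P_2$.

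The main obstacle is the last step: guaranteeing the existence of a \emph{generic} pair, i.e. nonemptiness of $P_2$. This requires $|\syt(\lambda)|\ge 3$, which fails only for the very smallest self-transpose shapes; for all self-transpose $\lambda$ beyond these degenerate cases the set $\syt(\lambda)$ is easily seen to contain three tableaux no two of which are transposes of one another, so all three transpose-classes are realized and at least three orbits exist.
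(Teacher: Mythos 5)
Your proof is correct and takes essentially the same route as the paper, whose entire argument is the one-line observation that a transposed pair of tableaux stays transposed under $C_n$; you have merely made explicit the equivariance of transposition with the Bender--Knuth generators and the resulting partition into the three transpose-classes $P_0,P_1,P_2$. Your closing caveat is well taken: for the smallest self-transpose shapes such as $(2,1)$ and $(2,2)$, where $|\syt(\lambda)|<3$, the class $P_2$ is empty and the orbit count genuinely drops below three, a degenerate case the paper's one-sentence proof passes over in silence.
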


\begin{proof}
    This follows from the fact that a transposed pair of tableaux stays transposed under $C_n$ (i.e., there are two trivial orbits).
\end{proof}

In the next section, we will prove that there are exactly 3 orbits; that is, we will see that transposition is the only invariant.

\section{2-transitivity}\label{se:2-trans}
The strategy to prove in general that pairs of standard Young tableaux are transitive is by induction on the number of boxes in the diagram, as removing the box containing $n$ yields a tableau on $n-1$ boxes. Since our proof does not depend on the two diagrams being the same shape we actually prove the stronger statement: given a fixed $n$, any pair of not necessarily the same partitions of $n$ is transitive under $C_n$ (excluding hook-shaped and self-transpose invariants).

First, let us be precise about which pairs of tableaux are transitive. If both partitions are hook-shaped, there is no chance that the pair of tableaux will be transitive.
\begin{defn*}
    Let $\lambda_1,\lambda_2$ be partitions of $n$. We say that the pair of partitions $(\lambda_1,\lambda_2)$ is \emph{viable} if at most one partition $\lambda_1$ or $\lambda_2$ is hook-shaped.
\end{defn*}

On the other hand, if both partitions are the same or transposed to each other, the pair of tableaux will still be transitive as long as the standard Young tableaux are not exactly the same or transposed.

\begin{defn*}
    Let $A\in\syt(\lambda_1)$ and $B\in\syt(\lambda_2)$. We say that the pair of tableaux $(A,B)$ is \emph{viable} if $(\lambda_1,\lambda_2)$ is viable and $A\neq B$ and $A^T\neq B$.
\end{defn*}

Since pairs of hook-shaped tableaux are not transitive, our base case should be those pairs where it is possible to remove a box from both and end up with hook-shaped partitions. As such, we have the following base case, where an \emph{almost hook-shaped} partition is a hook-shaped partition plus the box at $(2,2)$ (for example, the partition on the right in Example~\ref{ex: H, AH}).

\begin{defn*}
   We say that the pair $(\lambda_1,\lambda_2)$ is \emph{base case} if it is viable and either $\lambda_1$ and $\lambda_2$ are both almost hook-shaped or one of $\lambda_1$ and $\lambda_2$ is almost hook-shaped and the other is hook-shaped.
\end{defn*}

\subsection{Proof outline} Given two pairs of tableaux $(A,B)$ and $(C,D)$ (where $A$ and $C$ are of the same partition and $B$ and $D$ are of the same partition), if we can find a group element which takes $n$ to the same box in $A$ and $C$ and the same box in $B$ and $D$, in theory we can use our inductive hypothesis to find a path between these two pairs. More rigorously, we'd like to find some $g\in C_n$ such that $(g(A))^{-1}(n)=C^{-1}(n)$ and $(g(B))^{-1}(n)=D^{-1}(n)$. Our strategy for doing this is as follows: If the partition has at least 3 corners, we can find a box in which neither tableaux, say $A$ or $C$, contains $n$. Then, we can use induction to move $n-1$ to this third box in both $A$ and $C$, and consequently, applying $t_{n-1}$ to $A$ and $C$ moves $n$ into the desired box. By simultaneously performing this maneuver on $B$ and $D$, we can find a way to move $n$ into the same box in the second partition as well. (If there are fewer corners, we can do a similar trick.)

However, there's a catch. In order to use our inductive hypothesis, we also need to ensure that when the boxes containing $n$ in $(g(A),g(B))$ and $(C,D)$ are removed (as well as $(t_{n-1}g(A),t_{n-1}g(B))$ and $(t_{n-1}(C),t_{n-1}(D))$), the $(n-1)$-box partitions fall into our inductive case. Even when the two partitions are the same shape, this may not always be true.

\begin{example}
    This pair of tableaux is viable, but if we remove the boxes containing 9, the tableaux are transposed to each other and are no longer fully transitive.
    \begin{center}
    $$\begin{ytableau}
        1 & 2 & 3 & 9\\
        4 & 5 & 6\\
        7 & 8
    \end{ytableau}\quad\quad\quad \quad \begin{ytableau}
        1 & 4 & 7 & 9\\
        2 & 5 & 8\\
        3 & 6
    \end{ytableau}$$
    \end{center}
\end{example}

To ensure that we never run into this problem, we need \Cref{pre-move 2-trans}.

\subsection{Proof of 2-transitivity}

\begin{lemma} \label{pre-move 2-trans}
    Let $(\lambda_1,\lambda_2)$ be a viable pair of partitions of $n$ which is not base case, and let $A\in\syt(\lambda_1)$ and $B \in\syt(\lambda_2)$. If $(A,B)$ is viable, then there exists $g\in C_n$ such that $(g(A)\setminus n,g(B)\setminus n)$ is also viable.
\end{lemma}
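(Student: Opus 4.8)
The plan is to unpack ``viability of the reduced pair'' into two independent requirements and dispatch them separately. Writing $c_1=(g(A))^{-1}(n)$ and $c_2=(g(B))^{-1}(n)$, viability of $(g(A)\setminus n,\,g(B)\setminus n)$ means (i) the shape pair $(\lambda_1\setminus c_1,\lambda_2\setminus c_2)$ has at most one hook, and (ii) the reduced tableaux satisfy $g(A)\setminus n\neq g(B)\setminus n$ and $(g(A)\setminus n)^T\neq g(B)\setminus n$. Requirement (i) is free: since the pair is viable and not base case, at least one of $\lambda_1,\lambda_2$ is neither hook-shaped nor almost hook-shaped (if one partition is a hook, viability makes the other non-hook and not-base-case makes it non-almost-hook; if neither is a hook, not-base-case forbids both being almost hook-shaped). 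For such a partition the cell $(2,2)$ is not a corner, so removing any corner leaves $(2,2)$ in place and keeps it non-hook; hence $(\lambda_1\setminus c_1,\lambda_2\setminus c_2)$ always has a non-hook member, and only (ii) needs work.

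For (ii) I would first record the equivariance of transposition: each Bender--Knuth involution $t_k$ commutes with $T\mapsto T^T$ (the event ``$k$ and $k+1$ are adjacent in a row or column'' is transpose-symmetric), so $g(A^T)=g(A)^T$ and therefore $(g(A)\setminus n)^T=g(A^T)\setminus n$. Thus the two forbidden equalities in (ii) say exactly that $g(B)$ agrees on $\{1,\dots,n-1\}$ with $g(A)$, respectively with $g(A^T)$. Viability of $(A,B)$ is the statement $B\notin\{A,A^T\}$, i.e.\ these agreements fail for full tableaux at $g=\mathrm{id}$; my goal is to upgrade this to disagreement on $\{1,\dots,n-1\}$ for some $g$. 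If $g=\mathrm{id}$ already works we are done, so I may assume one of the forbidden equalities holds; after the transpose reduction it suffices to treat the equal case $A\setminus n=B\setminus n$, in which $n$ occupies distinct addable corners $p\neq q$ of $\mu:=\lambda_1\setminus p=\lambda_2\setminus q$ and $n-1$ occupies a common removable corner of $\mu$.

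The engine for breaking this is the single generator $t_{n-1}$, which swaps $n-1$ and $n$ in $A$ precisely when the box of $n-1$ is non-adjacent to $p$, and in $B$ precisely when it is non-adjacent to $q$. If both $\lambda_1$ and $\lambda_2$ are rectangles (a single corner each), then $A\setminus n=B\setminus n$ forces $\lambda_1=\lambda_2$ and $p=q$, hence $A=B$, and the transpose case likewise forces $A^T=B$; both contradict viability, so $g=\mathrm{id}$ is fine. Otherwise at least one shape, say $\lambda_1$, has two or more corners, and I would use \Cref{1-trans} to move $n-1$ by an element of $C_{n-1}$ to a removable corner $r'$ of $\mu$ that is non-adjacent to at least one of $p,q$ (this $C_{n-1}$ fixes the boxes of $n$ and preserves the equal-type relation, since it acts diagonally on the common tableau $A\setminus n=B\setminus n$). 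Applying $t_{n-1}$ then relocates $n$ in at least one of $A,B$; because $\lambda_1\setminus r'=(\mu\setminus r')\cup\{p\}$ differs from both $\mu$ and $(\mu\setminus r')\cup\{q\}$ when $p\neq q$, the two reduced shapes come out distinct, and a short check (using that the retained $(2,2)$ cell makes them non-transpose) yields a viable reduced pair.

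The hard part is the ``stuck'' configuration in which every reachable box of $n-1$ is adjacent to both $p$ and $q$, so that $t_{n-1}$ fixes $A$ and $B$ simultaneously and no single swap moves $n$; the underlying obstacle throughout is the diagonal constraint that one element $g$ must act on $A$ and $B$ at once, forbidding independent repositioning of $n$. I expect to clear this by moving $n$ inside the non-hook, non-almost-hook partition whenever it has at least two corners: then $\mu$ contains the $(2,2)$ cell, so it is neither a single row nor a single column and must possess a removable corner lying away from the two addable corners $p,q$, which is exactly the corner $r'$ needed. The residual small shapes where this fails, together with the transpose-type and the mixed case where equal- and transpose-type coincide (forcing $A\setminus n$ self-transpose), I would finish by the same relocation performed on $A^T$ in place of $A$, using the equivariance of the previous paragraph.
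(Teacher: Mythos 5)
Your reduction of the statement to the tableau-level conditions is sound, and two of your ingredients are correct and even more explicit than the paper's: the shape condition (at most one hook among the reduced shapes) really is automatic, because a viable non-base-case pair contains a partition that is neither hook- nor almost-hook-shaped, for which $(2,2)$ is not a removable corner; and the equivariance $g(A^T)=g(A)^T$ legitimately folds the case $(A\setminus n)^T=B\setminus n$ into the case $A\setminus n=B\setminus n$. Your engine is also the same as the paper's: use Theorem~\ref{1-trans} to reposition $n-1$ and then apply $t_{n-1}$ so that $n$ lands in corners producing distinct reduced shapes (and your observation that a non-hook $\mu$ with two distinct addable corners always admits a removable corner non-adjacent to at least one of them is true, though you assert rather than prove it).

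The genuine gap is at the finish. After the $t_{n-1}$ maneuver you only establish that the two reduced shapes are \emph{different}, which gives $g(A)\setminus n\neq g(B)\setminus n$ but says nothing about $(g(A)\setminus n)^T\neq g(B)\setminus n$; breaking the equality may create the transpose-equality. Your stated reason, that ``the retained $(2,2)$ cell makes them non-transpose,'' is not an argument: two distinct shapes both containing $(2,2)$ can be transposes of one another (e.g.\ $(4,2)$ and $(2,2,1,1)$), and the relevant shapes $\mu\cup\{p\}\setminus r'$ and $\mu$ (or $\mu\cup\{q\}\setminus r'$) can a priori form a transpose pair, since $\mu$ and $\mu^T$ may differ by a single box move (e.g.\ $\mu=(3,2,2)$, $\mu^T=(3,3,1)$). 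The closing sentence deferring the ``mixed'' self-transpose case to ``the same relocation performed on $A^T$'' does not engage with this either. This two-sided bookkeeping is exactly where the paper's proof does its work: it tracks the relations $\lambda_1^*=\lambda_2^*$ and $(\lambda_1^*)^T=\lambda_2^*$ simultaneously, notes that if both hold then applying $t_{n-1}$ destroys both, and that if only one holds then the other is already unavailable for the reduced tableaux. Your argument needs an analogous step (or a choice of $r'$ certified to avoid the transpose coincidence) before it closes.
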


\begin{proof}
\textbf{Case 1:} $\lambda_1=\lambda_2$. If $A\setminus n=B\setminus n$, then clearly $A=B$, which is not viable. Hence, we take $g=1$, as $(A\setminus n, B\setminus n)$ is already viable.

\textbf{Case 2:} $\lambda_1^T=\lambda_2$. The argument is symmetric to Case 1.

\textbf{Case 3:} $\lambda_1\neq\lambda_2$ and $\lambda_1\neq\lambda_2^T$. We can assume that either $\lambda_1$ or $\lambda_2$ must have at least 2 corners (otherwise, $n$ must always be in the only corner, and the problem is equivalent by removing the only corner). Without loss of generality, suppose $\lambda_1$ has at least two corners. By Theorem~\ref{1-trans}, there exists some $h\in C_n$ such that $(h(A))^{-1}(n-1)$ is a corner. Let $\lambda_1^*$ be the shape $\lambda_1$ with the box at $(h(A))^{-1}(n)$ removed, and let $\lambda_2^*$ be the shape $\lambda_2$ with the box at $(h(B))^{-1}(n)$ removed. If $\lambda_1^*\neq\lambda_2^*$ and $(\lambda_1^*)^T\neq\lambda_2^*$, then we are done, as clearly $h(A)\setminus n\neq h(B)\setminus n$ and $(h(A)\setminus n)^T\neq h(B)\setminus n$. If $\lambda_1^*=\lambda_2^*$ and $(\lambda_1^*)^T=\lambda_2^*$, then observe that $\lambda_1$ with the box $(t_{n-1}h(A))^{-1}(n)$ taken away cannot be the same or transposed shape as $\lambda_2$ with the box $(t_{n-1}h(B))^{-1}(n)$ taken away, so we are done. If $\lambda_1^*=\lambda_2^*$ and $(\lambda_1^*)^T\neq\lambda_2^*$, and if $h(A)\setminus n\neq h(B)\setminus n$, we are done since $(h(A)\setminus n)^T\neq h(B)\setminus n$. However, if $h(A)\setminus n = h(B)\setminus n$, then observe that $(t_{n-1}h(A)\setminus n)^T\neq t_{n-1}h(B)\setminus n$ and $t_{n-1}h(A)\setminus n\neq t_{n-1}h(B)\setminus n$ because $\lambda_1$ with the box $(t_{n-1}h(A))^{-1}(n)$ taken away is not the same shape as $\lambda_2$ with the box $(t_{n-1}h(B))^{-1}(n)$ taken away, so we are also done.
\end{proof}

\begin{thm} \label{2-trans}
    Let $(\lambda_1,\lambda_2)$ be a viable pair of partitions of $n$. The set of pairs $\{(A,B)\in \syt(\lambda_1)\times \syt(\lambda_2)\mid (A,B) \text{ is viable}\}$ is transitive under $C_n$.
\end{thm}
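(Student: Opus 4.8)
The plan is to prove the statement by strong induction on $n$, the number of boxes, using the base case machinery already set up. The base of the induction handles pairs $(\lambda_1,\lambda_2)$ that are \emph{base case} in the sense defined above, i.e.\ where removing the box containing $n$ lands us among hook-shaped partitions; these must be treated directly (for instance by explicit analysis of almost-hook shapes, reducing the first-row/first-column bookkeeping to the invariants computed in Lemma~\ref{invariant} and Proposition~\ref{2-trans hook}). For the inductive step, I would fix a viable pair of partitions $(\lambda_1,\lambda_2)$ that is not base case, and two viable pairs of tableaux $(A,B)$ and $(C,D)$ with $A,C\in\syt(\lambda_1)$ and $B,D\in\syt(\lambda_2)$; the goal is to produce $g\in C_n$ with $g(A)=C$ and $g(B)=D$.

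The key reduction is to arrange that $n$ sits in the same box in $g(A)$ as in $C$, and simultaneously in the same box in $g(B)$ as in $D$; once this is achieved, stripping off the box containing $n$ from both sides yields tableaux on $n-1$ boxes of the \emph{same} two shapes, and Proposition~\ref{pr:Schutzenberger}(2) guarantees that elements of $C_{n-1}$ (embedded in $C_n$ as those $s_{[i,j]}$ with $j\le n-1$) act without disturbing the box holding $n$, so the inductive hypothesis finishes the job. To move $n$ to a prescribed corner in both tableaux of a pair, I would use the ``third corner'' maneuver sketched in the proof outline: if a shape has at least two corners, Theorem~\ref{1-trans} lets me route $n-1$ to a chosen corner, and then a single application of $t_{n-1}$ slides $n$ into the adjacent desired corner; performing the analogous move for the other shape, and combining via elements that act independently on the two factors, synchronizes the positions of $n$ across both pairs. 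The careful point is that to invoke induction I must land in the viable-pair regime on $n-1$ boxes, and this is exactly what Lemma~\ref{pre-move 2-trans} secures: it produces some $g$ for which $(g(A)\setminus n,\,g(B)\setminus n)$ is viable, so I can first apply that lemma and only then run the matching argument.

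The main obstacle I expect is the bookkeeping of corners and cases when the two shapes coincide or are transposes of each other, where the ``viability'' constraints $A\neq B$ and $A^T\neq B$ interact delicately with the removal of the box containing $n$. In particular, even after aligning the box of $n$, one must check that the resulting $(n-1)$-tableaux are not accidentally equal or transposed (which would take us out of the viable set and block the induction); the self-transpose case is where this is most dangerous, since a transposed pair stays transposed under all of $C_n$ by Proposition~\ref{prop: self-transpose invariant}. Lemma~\ref{pre-move 2-trans} is designed to sidestep precisely this, so the real work is organizing the corner-counting: separating the situation where at least one shape has three or more corners (giving a free ``third box'' to park $n$) from the tighter two-corner situations, and verifying in each that the synchronizing element can be chosen to respect both factors. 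I would structure the inductive step around the number of corners of $\lambda_1$ and $\lambda_2$, handling the abundant-corner case first and then the constrained cases by the $t_{n-1}$ slide, each time reducing to a viable pair on fewer boxes.
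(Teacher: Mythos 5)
Your overall architecture matches the paper's: induction on $n$, a base case built around almost-hook shapes, Lemma~\ref{pre-move 2-trans} to stay in the viable regime, and synchronization of the box containing $n$ followed by a final application of the inductive hypothesis. However, there is a genuine gap in the mechanism you propose for the synchronization step. You say that Theorem~\ref{1-trans} lets you route $n-1$ to a chosen corner and that you then ``combine via elements that act independently on the two factors.'' No such elements exist: $C_n$ acts diagonally on $\syt(\lambda_1)\times\syt(\lambda_2)$, so any $g$ moving $n-1$ in $A$ simultaneously moves it in $B$, and single-tableau transitivity gives no control over where $n-1$ lands in the second tableau. The correct fix --- and what the paper does --- is to apply the inductive hypothesis (2-transitivity on viable pairs with $n-1$ boxes) to the pair $(g(A)\setminus n,\, g(B)\setminus n)$ itself, placing $n-1$ in prescribed corners of \emph{both} tableaux at once; the induction is thus used twice in each step, once to synchronize the position of $n$ and once more to finish after stripping it off.

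A second, smaller omission: after applying $t_{n-1}$ you must again land in a viable pair on $n-1$ boxes before the final application of induction, and Lemma~\ref{pre-move 2-trans} does not cover this --- it was consumed by the first reduction. The paper handles it by additionally requiring the synchronizing elements to keep the $(n-2)$-box subtableaux viable (possible because every $(n-2)$-box shape in play admits at least three standard tableaux), so that adding back the boxes containing $n-1$ and $n$ preserves viability. You correctly flag the two-corner situations as delicate; the paper's resolution there is the notion of an extended corner, which lets one tableau of the pair keep $n$ in place while the other receives it via $t_{n-1}$. None of this is fatal to your plan, but the independent-factors claim as stated would not survive being written out.
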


\begin{proof}
\textbf{Base case:} Our base case is pairs of partitions $(\lambda_1,\lambda_2)$ as specified above. To show the base case, we also use induction on the number of boxes. Let's assume that $\lambda_1$ is almost hook-shaped and $\lambda_2$ is either almost hook-shaped or hook-shaped. If we have $\lambda_1=\lambda_2=(2,2)$, or we have $\lambda_1=(3,2)$ and $\lambda_2=(3,2)$, $\lambda_2=(3,1,1)$, $\lambda_2=(2,1,1,1)$, or $\lambda_2=(1,1,1,1,1)$, or we have $\lambda_1=(3,2,1)$ and $\lambda_2=(4,2)$, $\lambda_2=(4,1,1)$, $\lambda_2=(3,2,1)$, $\lambda_2=(3,1,1,1)$, $\lambda_2=(2,2,1,1)$, $\lambda_2=(2,1,1,1,1)$, or $\lambda_2=(1,1,1,1,1,1)$, we can manually verify that the theorem holds.

Otherwise, observe that $\max(r_1(\lambda_1),c_1(\lambda_1))>3$, and suppose that the theorem holds for base case pairs with $n-1$ boxes. Let $A\in\syt(\lambda_1)$ and $B\in\syt(\lambda_2)$, and without loss of generality, suppose $r_1(\lambda_1)\geq c_1(\lambda_1)$ and $r_1(\lambda_2)\geq c_1(\lambda_2)$. It suffices to show that there exists some $g\in C_n$ such that $(g(A))^{-1}(n)=(1,r_1(\lambda_1))$ and $(g(B))^{-1}(n)=(1,r_1(\lambda_2))$. By Theorem~\ref{1-trans}, there exists some $h\in C_n$ such that $(h(A))^{-1}(n)=(1,r_1(\lambda_1))$. If $(h(B))^{-1}(n)=(1,r_1(\lambda_2))$, we are done. Otherwise, observe that since $r_1(\lambda_1)>3$, it follows that $\lambda_1$ with the box $(h(A))^{-1}(n)$ taken away is either a hook-shaped partition or an almost-hook-shaped partition with $\max(r_1(\lambda_1),c_1(\lambda_1))\geq 3$ and $\lambda_2$ with the box $(h(B))^{-1}(n)$ taken away is hook-shaped, so there exists some $h'\in C_n$ such that $(h'h(A))^{-1}(n-1)=(1,r_1(\lambda_1)-1)$ and $(h'h(B))^{-1}(n-1)=(1,r_1(\lambda_2))$. Then, $(t_{n-1}h'h(A))^{-1}(n)=(1,r_1(\lambda_1))$ and $(t_{n-1}h'h(B))^{-1}(n)=(1,r_2(\lambda_1))$, as desired.

\textbf{Inductive step:} Now, assume $(\lambda_1,\lambda_2)$ is not base case, and let $A\in\syt(\lambda_1)$ and $B\in\syt(\lambda_2)$. We know the theorem holds for all pairs of partitions of $n-1$. We now show that for any $C\in\syt(\lambda_1)$ and $D\in\syt(\lambda_2)$ where $A\neq B,B^T$ and $C\neq D,D^T$, there exists some $g\in C_n$ such that $g(A)=C,g(B)=D$.

By \Cref{pre-move 2-trans}, we can find $g_1,g_2\in C_n$ such that the pairs $(g_1(A)\setminus n, g_1(B)\setminus n$ and $(g_2(C)\setminus n, g_2(D)\setminus n)$ are both viable. We now select a position to move $n-1$ to in $\lambda_1$ for $g_1(A)$ and $g_2(C)$.

If $\lambda_1$ has more than two corners, we choose the corner $(i,j)$ for both $g_1(A)$ and $g_2(C)$ such that $(i,j)\neq (g_1(A))^{-1}(n)$ and $(i,j)\neq (g_2(C))^{-1}(n)$. If $\lambda_1$ has two corners or fewer, then observe that $\lambda_1$ has an extended corner $(k,l)$. Without loss of generality, suppose $(k,l)=(g_1A)^{-1}(n)$. For $g_1(A)$, we select $(k,l-1)$ or $(k-1,l)$ (whichever it is possible to place $n-1$ in by the definition of an extended corner), and for $g_2(C)$, we select $(k,l)$.

We do the same process to select a position to move $n-1$ to in $\lambda_2$ for $g_1(B)$ and $g_2(D)$.
Since $(g_1(A)\setminus n,g_1(B)\setminus n)$ and $(g_2(C)\setminus n,g_2(D)\setminus n)$ are viable pairs on $n-1$ boxes, we can use our inductive hypothesis to find $h_1,h_2\in C_n$ such that $(h_1g_1(A))^{-1}(n-1),(h_1g_1(B))^{-1}(n-1),(h_2g_2(C))^{-1}(n-1),$ and $(h_2g_2(D))^{-1}(n-1)$ are in the specified positions as given above (so that after applying $t_{n-1}$ to both pairs, the position of $n$ is the same in the $A$ and $C$ as well as the $B$ and $D$ tableaux). Since we want to apply $t_{n-1}$ to both pairs and then use our inductive hypothesis yet again, we also need to ensure the pairs restricted to $n-1$ boxes will still be viable after applying $t_{n-1}$. Since a viable pair remains viable even after adding a box to any position, this is true as long as the $(n-2)$-box subdiagrams of both pairs are viable. Since all $(n-2)$-box subdiagrams have at least 3 standard Young tableaux, we can require $h_1$ and $h_2$ to also keep both $(n-2)$-box subdiagrams viable. Hence, our choice of $h_1$ and $h_2$ is such that $(h_1g_1(A)\setminus (n,n-1),(h_1g_1(B)\setminus (n,n-1))$ and $(h_2g_2(C)\setminus (n,n-1),(h_2g_2(D)\setminus (n,n-1))$ are viable; and $(t_{n-1}h_1g_1(A))^{-1}(n)=(t_{n-1}h_2g_2(C))^{-1}(n)$ and $(t_{n-1}h_1g_1(B))^{-1}(n)=(t_{n-1}h_2g_2(D))^{-1}(n)$). 

Thus, we have that the pairs $(t_{n-1}h_1g_1(A)\setminus n, t_{n-1}h_1g_1(B)\setminus n)$ and $(t_{n-1}h_2g_2(C)\setminus n, t_{n-1}h_2g_2(D)\setminus n)$ are both viable. Hence, we can use our inductive hypothesis once again to find $f_1,f_2\in C_n$ such that $f_1t_{n-1}h_1g_1(A)=f_2t_{n-1}h_2g_2(C)$ and $f_1t_{n-1}h_1g_1(B)=f_2t_{n-1}h_2g_2(D)$, so the group element $(f_2t_{n-1}h_2g_2)^{-1}f_1t_{n-1}h_1g_1$ transforms the pair $(A,B)$ into $(C,D)$, as desired.
\end{proof}

\begin{cor}\label{co:2-trans}
     Let $\lambda$ be a non hook-shaped partition of $n$. If $\lambda$ is self-transpose, the set of pairs $\syt(\lambda)\times \syt(\lambda)$ has 3 orbits under $C_n$; otherwise, there are 2 orbits.
\end{cor}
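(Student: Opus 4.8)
The plan is to derive the corollary from Theorem~\ref{2-trans} by specializing to $\lambda_1=\lambda_2=\lambda$ and partitioning $\syt(\lambda)\times\syt(\lambda)$ into $C_n$-invariant pieces according to the relations $A=B$, $A^T=B$, and ``neither''. Since $\lambda$ is not hook-shaped, the pair $(\lambda,\lambda)$ is viable, so Theorem~\ref{2-trans} applies directly: the set of viable tableau pairs, namely those $(A,B)$ with $A\neq B$ and $A^T\neq B$, is a single orbit. It then remains only to analyze the non-viable pairs and count how many further orbits they contribute.

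First I would treat the diagonal $\{(A,A):A\in\syt(\lambda)\}$. Because $C_n$ acts diagonally, $g\cdot(A,A)=(g(A),g(A))$, so the diagonal is $C_n$-invariant; and by Theorem~\ref{1-trans} single tableaux are transitive, so for any $(A,A)$ and $(A',A')$ there is $g$ with $g(A)=A'$, whence the diagonal is one orbit. Next I would split into cases. If $\lambda$ is not self-transpose, then $A^T$ has shape $\lambda^T\neq\lambda$, so $A^T=B$ is impossible for $B\in\syt(\lambda)$; every pair is thus diagonal or viable, giving exactly $2$ orbits. If $\lambda$ is self-transpose, then $A^T\in\syt(\lambda)$, and the transpose-diagonal $\{(A,A^T):A\neq A^T\}$ is a genuine third piece. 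Here I would invoke Proposition~\ref{prop: self-transpose invariant}, whose content is that the action commutes with transpose, i.e.\ $g(A^T)=(g(A))^T$; this makes the transpose-diagonal $C_n$-invariant, and the diagonal argument adapts verbatim via $g\cdot(A,A^T)=(g(A),(g(A))^T)$ to show it too is a single orbit. Together with the diagonal and the viable orbit, this yields $3$ orbits.

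The step requiring the most care is the nonemptiness of each claimed orbit, in particular the viable one: I must verify that there exist $A,B\in\syt(\lambda)$ with $A\neq B$ and $A^T\neq B$. For a fixed $A$ the excluded partners are $A$ and $A^T$, so nonemptiness holds as soon as $|\syt(\lambda)|\geq 3$, which is automatic for every non-hook $\lambda$ except the degenerate self-transpose case $\lambda=(2,2)$. In that case $\syt(\lambda)$ consists of exactly two mutually transposed tableaux, the viable orbit is empty, and only $2$ orbits exist; this single exceptional partition must be noted or excluded. Apart from this nonemptiness bookkeeping, the corollary is a formal consequence of Theorems~\ref{1-trans} and~\ref{2-trans} together with the transpose-invariance of the action established in Proposition~\ref{prop: self-transpose invariant}.
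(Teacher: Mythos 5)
Your proposal is correct and follows essentially the same route as the paper: the paper's one-line proof of Corollary~\ref{co:2-trans} implicitly rests on exactly your decomposition of $\syt(\lambda)\times\syt(\lambda)$ into the diagonal, the transpose-diagonal, and the viable pairs, with Theorem~\ref{2-trans} handling the last piece, Theorem~\ref{1-trans} the first, and Proposition~\ref{prop: self-transpose invariant} supplying the transpose-equivariance. Your nonemptiness bookkeeping is a genuine addition: for the self-transpose non-hook shape $\lambda=(2,2)$ the two standard tableaux are mutually transposed, the viable set is empty, and there are only $2$ orbits rather than the claimed $3$ --- an edge case the paper's statement (and the ``at least $3$ orbits'' of Proposition~\ref{prop: self-transpose invariant}) overlooks and which should indeed be excluded or noted.
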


\begin{proof}
    This follows from \Cref{prop: self-transpose invariant}.
\end{proof}

\section{3-transitivity}\label{se:3-trans}
The proof structure is the same as for 2-transitivity, with a little more casework. We also need to update our inductive case and base case as follows:

\begin{defn*}
    Let $\lambda_1,\lambda_2,\lambda_3$ be partitions of $n$. We say that the triple of partitions $(\lambda_1,\lambda_2,\lambda_3)$ is \emph{viable} if at most one partition $\lambda_1,\lambda_2,\lambda_3$ is hook-shaped.
\end{defn*}

\begin{defn*}
    Let $A\in\syt(\lambda_1),B\in\syt(\lambda_2),C\in\syt(\lambda_3)$. We say that the triple of tableaux $(A,B,C)$ is \emph{viable} if $(\lambda_1,\lambda_2,\lambda_3)$ is viable and the following conditions are satisfied:
    \begin{enumerate}
        \item No two tableaux $A,B,C$ are the same.
        \item No two tableaux $A,B,C$ are transpositions of each other.
    \end{enumerate}
\end{defn*}

\begin{defn*}
    We say that the triple $(\lambda_1,\lambda_2,\lambda_3)$ is \emph{base case} if it is viable and satisfies one of the following conditions:
    \begin{enumerate}
        \item Two partitions of $\lambda_1,\lambda_2,\lambda_3$ are almost hook-shaped.
        \item One partition of $\lambda_1,\lambda_2,\lambda_3$ is almost hook-shaped and one is hook-shaped.
    \end{enumerate}
\end{defn*}

\begin{lemma} \label{pre-move}
    Let $(\lambda_1,\lambda_2,\lambda_3)$ be a viable triple of partitions of $n$ (each with at least two corners) that are not base case, and let $A\in\syt(\lambda_1)$, $B \in\syt(\lambda_2)$, and $C\in\syt(\lambda_3)$. If $(A,B,C)$ is viable, then there exists $g\in C_n$ such that $(gA\setminus n,gB\setminus n,gC\setminus n)$ is also viable.
\end{lemma}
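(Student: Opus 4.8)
The goal is to produce $g \in C_n$ making $(gA\setminus n, gB\setminus n, gC\setminus n)$ viable, which by definition means three separate things: the three reduced shapes form a viable triple (at most one is hook-shaped), no two of the reduced tableaux coincide, and no two are transposes. The plan is to follow the strategy of \Cref{pre-move 2-trans}: use single-transitivity (\Cref{1-trans}) to reposition $n$ and $n-1$ inside the tableaux, read off the three reduced shapes $\lambda_i\setminus c_i$ (where $c_i$ is the corner holding $n$), and, whenever two of them coincide in a forbidden way, repair this with the backup involution $t_{n-1}$, which is available because the hypothesis that each $\lambda_i$ has at least two corners lets us place $n$ and $n-1$ at two distinct corners and thereby toggle $c_i$ between them.

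I would first dispose of the shape condition once and for all. A reduced shape $\lambda_i\setminus c$ is hook-shaped precisely when $\lambda_i$ is itself hook-shaped, or when $\lambda_i$ is almost hook-shaped and $c=(2,2)$, this being the only way that removing a corner can destroy the box $(2,2)$. Hence two reduced shapes can be simultaneously hook-shaped only if the triple contains two hooks, a hook and an almost-hook, or two almost-hooks, and all three possibilities are excluded by viability together with the hypothesis that the triple is not base case. Thus the reduced shapes automatically form a viable triple for \emph{every} $g$, and only the two tableau-level conditions remain.

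Next I would record two equivariance observations that eliminate most of the remaining obstructions. Transposition is $C_n$-equivariant (each $t_i$ commutes with it, since the swap of $i,i+1$ is legal exactly when the two boxes lie in different rows and different columns, a condition invariant under transpose) and commutes with deleting the box containing $n$. Combined with the fact that a group element permutes $\syt(\lambda)$, and that two distinct tableaux of the same shape cannot agree after deleting $n$, this yields: if $\lambda_i=\lambda_j$ then $gA_i\setminus n \neq gA_j\setminus n$ for all $g$, and dually if $\lambda_i=\lambda_j^T$ then $gA_i\setminus n$ is never the transpose of $gA_j\setminus n$. In particular a self-transpose equal pair is always safe. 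So for each pair I must prevent only a single event, and that event forces an equality (respectively a transposition) of the two reduced \emph{shapes} $\lambda_i\setminus c_i,\ \lambda_j\setminus c_j$. It therefore suffices to choose the corners $c_i$ so that each dangerous pair of reduced shapes is neither equal nor transpose in the relevant direction, and, in the residual case where a shape coincidence is forced, to check that the reduced tableaux themselves still differ.

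I would then organize the argument by the pattern of relatedness (equal-or-transpose) among $\lambda_1,\lambda_2,\lambda_3$, paralleling the split into ``$\lambda_1\sim\lambda_2$'' and ``otherwise'' of the two-tableaux proof. For a single forbidden coincidence, controlling one tableau with \Cref{1-trans} and then applying $t_{n-1}$ to toggle its reduced shape repairs it, exactly as in \Cref{pre-move 2-trans}. The essential new difficulty, and the step I expect to be the main obstacle, is that there are now three pairs to keep viable simultaneously under a single $g$: because $t_{n-1}$ acts on all three tableaux at once, repairing a coincidence in one pair risks creating one in another, and the pair not containing the fully-controlled tableau cannot be adjusted on its own. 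I expect the hardest configurations to be those in which all three shapes are related, or in which two related pairs share a tableau, since there the forbidden reduced-shape coincidences are most tightly constrained; there I would use the already-established \Cref{2-trans} to pin down two tableaux at once and then argue that the two global states available (the chosen $g$, possibly post-composed with $t_{n-1}$) cannot both be bad. Showing that this limited backup always suffices, rather than needing independent control of all three pairs, is the crux, and it is precisely here that the hypotheses ``at least two corners'' and ``not base case'' provide the room to separate the reduced shapes.
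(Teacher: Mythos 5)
Your setup is sound and in places sharper than the paper's own write-up: the observation that a reduced shape $\lambda_i\setminus c$ can be a hook only if $\lambda_i$ is a hook or an almost-hook with $c=(2,2)$, so that the shape-level viability of the reduced triple is automatic from ``viable and not base case,'' is correct and fills a step the paper leaves implicit; likewise the equivariance argument showing that an equal-shape pair can never become equal after deleting $n$, and a transposed-shape pair can never become transposed, matches the mechanism of the paper's Case 1 and correctly reduces everything to controlling coincidences of the reduced \emph{shapes}. The overall strategy you describe --- pin down two of the three tableaux with Theorem~\ref{2-trans} and use $t_{n-1}$ as a one-bit backup --- is exactly the paper's.

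The gap is that the part you explicitly defer as ``the crux'' is the actual content of the proof, and it is not routine. Two concrete pieces are missing. First, in the configuration where no two of $\lambda_1,\lambda_2,\lambda_3$ are equal or transposed, the shape-coincidence dangers are not eliminated by your reductions, and the paper needs a genuinely new ingredient: a corner $(i,j)$ of $\lambda_1$ such that $\lambda_1\setminus(i,j)$ is contained in neither $\lambda_2$ nor $\lambda_2^T$ (whose existence must be argued --- if only the two ``compatible'' corners exist one falls back into the related case), and $n$ must be steered into that corner of $A$ while simultaneously everything else is arranged. Your proposal never identifies this corner or explains how the $(A,B)$ pair is protected when the shapes are unrelated. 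Second, the claim that ``the two global states $g$ and $t_{n-1}g$ cannot both be bad'' is asserted but not proved: one must (a) show that 2-transitivity can deliver a $g$ for which the $(A,C)$ pair remains viable under \emph{both} $g$ and $t_{n-1}g$ with $n-1$ placed at a prescribed corner of $g(C)$ (this requires exhibiting a viable target pair with these properties, not just invoking transitivity), (b) verify that the $(A,B)$ pair survives the toggle, and (c) verify that when the $(B,C)$ pair is bad for $g$, the toggle actually changes the reduced shape of $C$ relative to that of $B$ so that the pair becomes viable for $t_{n-1}g$. Without these verifications the argument is a plan rather than a proof, and you have flagged the decisive step yourself without closing it.
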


\begin{proof}
\textbf{Case 1:} $\lambda_1=\lambda_2$ or $\lambda_1=\lambda_2^T$. Without loss of generality, suppose $\lambda_1=\lambda_2$. Using \Cref{2-trans}, let $g\in C_n$ such that $n-1$ is in a corner of $g(C)$; $g(A)\setminus n$ and $g(C)\setminus n$ are not the same as or transposed to each other; and $t_{n-1}g(A)\setminus n$ and $t_{n-1}g(C)\setminus n$ are not the same as or transposed to each other. If $g(A)\setminus n=g(B)\setminus n$ (resp., $g(A)\setminus n=(g(B)\setminus n)^T$), then $A=B$ (resp., $A=B^T$) since $\lambda_1=\lambda_2$, which contradicts that $(A,B,C)$ is viable. So, $g(A)\setminus n$ cannot be the same as or transposed to $g(B)\setminus n$. Finally, if $g(B)\setminus n\neq g(C)\setminus n$ and $g(B)\setminus n\neq (g(C)\setminus n)^T$, we are done. Otherwise, since $n-1$ is in a corner of $g(C)$, we have that $t_{n-1}g(B)\setminus n\neq t_{n-1}g(C)\setminus n$ and $t_{n-1}g(B)\setminus n\neq (t_{n-1}g(C)\setminus n)^T$, and we are also done.

\textbf{Case 2:} No two partitions of $\lambda_1,\lambda_2,\lambda_3$ are the same as or transposed to each other. There exists some box $(k,l)$ such that $\lambda_1\setminus (k,l)$ is a subshape of $\lambda_2$, and there exists some box $(k',l')$ such that $\lambda_1\setminus (k',l')$ is a subshape of $\lambda_2^T$. If $(k,l)$ and $(k',l')$ are the only corners of $\lambda_1$, then $\lambda_1=\lambda_2$ or $\lambda_1=\lambda_2^T$, and we may refer to Case 1.

Otherwise, there is some corner $(i,j)\in\lambda_1$ such that $\lambda_1\setminus(i,j)\notin \lambda_2$ and $\lambda_1\setminus(i,j)\not\subset \lambda_2^T$. Using \Cref{2-trans}, let $g\in C_n$ such that $(g(A))^{-1}(n)=(i,j)$, $n-1$ is in a corner of $g(C)$, $g(A)\setminus n$ and $g(C)\setminus n$ are not the same as or transposed to each other, and $t_{n-1}g(A)\setminus n$ and $t_{n-1}g(C)\setminus n$ are not the same as or transposed to each other. Since the shape of $g(A)\setminus n$ contains a box which is not in $\lambda_2$, it cannot be the case that $g(A)\setminus n$ is the same as or transposed to $g(B)\setminus n$. If $g(B)\setminus n\neq g(C)\setminus n$ and $g(B)\setminus n\neq (g(C)\setminus n)^T$, we are done. Otherwise, since $n-1$ is in a corner of $g(C)$, we have that $t_{n-1}g(B)\setminus n\neq t_{n-1}g(C)\setminus n$ and $t_{n-1}g(B)\setminus n\neq (t_{n-1}g(C)\setminus n)^T$, and we are also done.
\end{proof}

\begin{thm}\label{th:3-trans}
    Let $\lambda_1,\lambda_2,\lambda_3$ be partitions of $n$. Then the set of viable triples $A\in\syt(\lambda_1),B\in\syt(\lambda_2),C\in\syt(\lambda_3)$ is transitive under $C_n$.
\end{thm}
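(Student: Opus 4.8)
The plan is to establish Theorem~\ref{th:3-trans} by induction on $n$, running the same descent as in the proof of Theorem~\ref{2-trans} but now tracking three tableaux simultaneously. Fix a viable triple of shapes $(\lambda_1,\lambda_2,\lambda_3)$ and two viable triples of tableaux $(A,B,C)$ and $(D,E,F)$ with $A,D\in\syt(\lambda_1)$, $B,E\in\syt(\lambda_2)$, and $C,F\in\syt(\lambda_3)$; the goal is an element $g\in C_n$ with $g(A)=D$, $g(B)=E$, and $g(C)=F$. As with pairs, the central reduction is to act by group elements so that the box containing $n$ ends up in the same position in $A$ and $D$, in $B$ and $E$, and in $C$ and $F$. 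Once this alignment holds, deleting $n$ from all six tableaux produces two triples on $n-1$ boxes, and transitivity at level $n-1$ (the inductive hypothesis) supplies the rest; composing all the elements used then yields $g$.

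I would first dispose of the base case triples (two shapes almost hook-shaped, or one almost hook-shaped and one hook-shaped) by a secondary induction on $n$, checking the finitely many minimal configurations by hand and reducing larger ones as in the base case of Theorem~\ref{2-trans}, leaning on the already-proved $2$-transitivity to pin down two of the three tableaux at a time. I would also treat shapes with a single corner (rectangles) separately: for such a $\lambda_k$ the box holding $n$ is forced into the unique corner in every tableau, so that coordinate is automatically aligned and we may descend directly. For a non-base-case viable triple in which all three shapes have at least two corners, Lemma~\ref{pre-move} already provides $g_1,g_2\in C_n$ such that $(g_1A\setminus n,\,g_1B\setminus n,\,g_1C\setminus n)$ and $(g_2D\setminus n,\,g_2E\setminus n,\,g_2F\setminus n)$ are both viable triples on $n-1$ boxes.

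The heart of the argument is the alignment step, carried out shape by shape. For each $\lambda_k$ with at least three corners I choose a common target corner $q_k$ distinct from the two current positions of $n$; since distinct corners are pairwise incomparable, the inductive hypothesis lets me route $n-1$ to $q_k$ in both $(n-1)$-box triples, after which $t_{n-1}$ swaps $n$ into $q_k$. For $\lambda_k$ with only two corners I use an extended corner exactly as in Theorem~\ref{2-trans}: route $n-1$ to the box adjacent to the corner in one triple, so that $t_{n-1}$ leaves that tableau unchanged, while routing $n-1$ into the corner itself in the other triple, so that $t_{n-1}$ moves $n$ there, making the two positions agree. Applying this to all three shapes and then a single $t_{n-1}$ to each triple aligns the position of $n$ across $A/D$, $B/E$, and $C/F$; removing $n$ yields viable $(n-1)$-box triples, and a final use of the inductive hypothesis finishes the proof.

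The main obstacle, just as for pairs, is the viability bookkeeping rather than the combinatorial moves. Because the inductive hypothesis is invoked twice---once to route $n-1$ and again, after $t_{n-1}$, to conclude on $n-1$ boxes---I must guarantee not only that the $(n-1)$-box triples remain viable but also that the doubly restricted $(n-2)$-box triples stay viable, using that viability survives re-adding a box and that every $(n-2)$-box subdiagram carries at least three tableaux (which gives enough freedom to choose the routing elements accordingly). The feature genuinely new to the triple setting is that there are more ways for two of the three tableaux to coincide or to be transposes of one another; controlling these collisions is precisely the role of the two-case analysis in Lemma~\ref{pre-move}, and the delicate point is coordinating the placement of $n-1$ across all three tableaux at once so that no such collision is forced---with the shapes having few corners, where the extended-corner maneuver is needed, demanding the most care.
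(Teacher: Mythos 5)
Your proposal is correct and follows essentially the same route as the paper's proof: induction on $n$ with the base-case triples handled by a secondary induction and hand-checked minimal cases, Lemma~\ref{pre-move} to secure viability after deleting $n$, the third-corner versus extended-corner dichotomy to align the position of $n$ via $t_{n-1}$, and the same $(n-2)$-box viability bookkeeping before the final application of the inductive hypothesis. No substantive differences to report.
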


\begin{proof}
\textbf{Base Case:} We also use induction to show the base case. Let's assume that $\lambda_1$ is almost hook-shaped, $\lambda_2$ is either almost hook-shaped or hook-shaped, and $\lambda_3$ is any partition (which is not hook-shaped). (Throughout the proof, we  assume that partitions have at least two corners, as otherwise, the position of $n$ in is fixed in the only corner, and we are done by \Cref{2-trans}.)

\begin{table}
\centering
\begin{tabular}{l|l|l|l}
 & $\lambda_1$ & $\lambda_2$ & $\lambda_3$  \\\hline
 $n=4$ & $(2,2)$                          & all                           & all            \\\hline
 $n=5$ & $(3,2)$                           & all                         & all                \\
 & $(2,2,1)$                         & all                          & all          \\\hline
 $n=6$ & $(3,2,1)$                         & all                          & all           \\
 & $(4,2)$                          & all                       & all        \\
 & $(2,2,1,1)$                      & all                 &  all     
\end{tabular}
\caption{The 3-transitive cases we check by hand. Note: ``all" refers to all hook-shaped and almost hook-shaped partitions (of $n$ boxes) in the $\lambda_2$ column and all non-hook-shaped partitions in the $\lambda_3$ column. \label{table}}
\end{table}

We manually verify that the theorem holds for the cases in \Cref{table}. Otherwise, observe that $\max(r_1(\lambda_1),c_1(\lambda_1))>3$ and $\max(r_1(\lambda_2),c_1(\lambda_2))>3$, and suppose that the theorem holds for base case triples of partitions of $n-1$. Let $A\in\syt(\lambda_1),B\in\syt(\lambda_2),C\in\syt(\lambda_3)$, and without loss of generality, suppose the first column is not longer than the first row in $\lambda_1$ and $\lambda_2$ (that is, $r_1(\lambda_1)\geq c_1(\lambda_1)$ and $r_1(\lambda_2)\geq c_1(\lambda_2)$). Fix $(k,l)$ to be any corner of $\lambda_3$. We now show there is a way to move $n$ into $(k,l)$ of $\lambda_3$ and the first row of $\lambda_1$ and $\lambda_2$.

Using \Cref{2-trans}, let $g\in C_n$ such that $(g(A))^{-1}(n)=(1,r_1(\lambda_1))$ and $(g(C))^{-1}(n)\neq (k,l)$. Observe that $(g(A)\setminus n,g(B)\setminus n,g(C)\setminus n)$ is base case. Since $r_1(\lambda_1)>3$, we use induction to find some $h\in C_n$ such that $(hg(A))^{-1}(n-1)=(1,r_1(\lambda_1)-1)$, $(hg(C))^{-1}(n-1)=(k,l)$, and lastly, $(hg(B))^{-1}(n-1)=(1,r_1(\lambda_2)-1)$ if $(g(B))^{-1}(n)=(1,r_1(\lambda_2)$ or otherwise $(hg(B))^{-1}(n-1)=(1,r_1(\lambda_2))$ (while maintaining the positions of $n$ in all three tableaux). Because $n-1$ and $n$ are adjacent in $hg(A)$ and $r_1(\lambda_2)>2$, the box $(k,l)$ of $t_{n-1}hg(C)$ and the first rows of $t_{n-1}hgA$ and $t_{n-1}hg(B)$ contain $n$, as desired.

\textbf{Inductive step:} Now, assume the theorem holds for partitions of $n-1$. Let $A,D\in\syt(\lambda_1)$, $B,E \in\syt(\lambda_2)$, and $C,F\in\syt(\lambda_3)$ such that $(A,B,C)$ and $(D,E,F)$ be viable triples. Use \Cref{pre-move} to find $g_1,g_2\in C_n$ such that $(g_1(A)\setminus n,g_1(B)\setminus n,g_1(C)\setminus n)$ and $(g_2(D)\setminus n,g_2(E)\setminus n,g_2(F)\setminus n)$ are viable.

Our goal is to move $n$ into the same position in $g_1(A)$ and $g_2(D)$ (resp., $g_1(B)$ and $g_2(E)$, $g_1(C)$ and $g_2(F)$). We do this by selecting a location to move $n-1$ to in both tableaux of each partition. Recall that we can assume that $\lambda_1,\lambda_2,\lambda_3$ all have at least two corners.

Let us consider $\lambda_1$ (the process is the same for $\lambda_2$ and $\lambda_3)$. If $\lambda_1$ has more than two corners, let $(i_A,j_A)$ and $(i_D,j_D)$ be the same corner which is not equal to $g_1(A)^{-1}(n)$ or $g_2(D)^{-1}(n)$. Otherwise, if $\lambda_1$ has exactly two corners, one such corner must be an extended corner. Without loss of generality, suppose $(k,l):=g_1(A)^{-1}(n)$ is the extended corner and $g_2(D)^{-1}(n)$ is the other corner. Then, let $(i_A,j_A):=(k-1,l)$ or $(i_A,j_A):=(k,l-1)$ (whichever box is possible to place $n-1$ in), and let $(i_D,j_D):=(k,l)$.

Now, repeat this process for $\lambda_2$ and $\lambda_3$ to find positions $(i_B,j_B),(i_E,j_E)$ and $(i_C,j_C),(i_F,j_F)$ to move $n-1$ into. By induction, there exist $h_1,h_2\in C_n$ such that $n-1$ is in the specified positions in the triples $(h_1g_1(A),h_1g_1(B),h_1g_1(C))$ and $(h_2g_2(D),h_2g_2(E),h_2g_2(F))$ such that when they are restricted to $n-2$ boxes, both triples are still viable. Finally, we have that $n$ is in the same position in $t_{n-1}h_1g_1(A)$ and $t_{n-1}h_2g_2(D)$ (resp., $t_{n-1}h_1g_1(B)$ and $t_{n-1}h_2g_2(E)$, $t_{n-1}h_1g_1(C)$ and $t_{n-1}h_2g_2(F)$). Using the same argument as in \Cref{2-trans}, both triples $(t_{n-1}h_1g_1(A)\setminus n, t_{n-1}h_1g_1(B)\setminus n,t_{n-1}h_1g_1(C)\setminus n)$ and $(t_{n-1}h_2g_2(D)\setminus n, t_{n-1}h_2g_2(E)\setminus n,t_{n-1}h_2g_2(F)\setminus n)$ are viable. Hence, we can find $f_1,f_2\in C_n$ such that $f_1t_{n-1}h_1g_1(A)=f_2t_{n-1}h_2g_2(C)$, $f_1t_{n-1}h_1g_1(B)=f_2t_{n-1}h_2g_2(D)$, and $f_1t_{n-1}h_1g_1(C)=f_2t_{n-1}h_2g_2(F)$. Thus, the group element $(f_2t_{n-1}h_2g_2)^{-1}f_1t_{n-1}h_1g_1$ transforms the triple $(A,B,C)$ into $(D,E,F)$, as desired.
\end{proof}

As a consequence, we get the following (Theorem~B in the Introduction):
\begin{cor}\label{co:3-trans}
    Suppose that $\lambda$ is not hook-shaped and not self-transpose. Let $N$ be the cardinality of $\syt(\lambda)$. Then the action of $C_n$ on the set $\syt(\lambda)$ is $3$-transitive.
\end{cor}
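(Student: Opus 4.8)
The plan is to derive this corollary directly from Theorem~\ref{th:3-trans} by specializing the three shapes to $\lambda_1=\lambda_2=\lambda_3=\lambda$ and translating the notion of a \emph{viable triple} into the standard meaning of $3$-transitivity. Recall that an action on a set is $3$-transitive exactly when it is transitive on ordered triples of pairwise distinct elements; so the task is to show that any two such triples in $\syt(\lambda)$ lie in a single $C_n$-orbit.

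First I would verify that the constant triple of partitions $(\lambda,\lambda,\lambda)$ is viable in the sense of Section~\ref{se:3-trans}, which requires at most one of the three partitions to be hook-shaped. Since $\lambda$ is assumed not hook-shaped, none of the three is, so the partition triple is viable. The key point is then to check that \emph{every} ordered triple $(A,B,C)$ of pairwise distinct tableaux in $\syt(\lambda)$ is a viable triple of tableaux. Condition (1) (no two equal) is precisely pairwise distinctness. Condition (2) (no two transposes of one another) is where the hypothesis that $\lambda$ is not self-transpose is used: for any $A\in\syt(\lambda)$ the transpose $A^T$ has shape $\lambda^T$, and since $\lambda\neq\lambda^T$ this shape differs from $\lambda$, so $A^T$ can never equal another tableau $B$ of shape $\lambda$. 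Thus condition (2) holds automatically, and a triple of tableaux of shape $\lambda$ is viable if and only if its three entries are distinct.

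With these two observations in hand the conclusion is immediate: given triples $(A,B,C)$ and $(D,E,F)$ of pairwise distinct elements of $\syt(\lambda)$, both are viable, so Theorem~\ref{th:3-trans} furnishes some $g\in C_n$ with $g(A)=D$, $g(B)=E$, and $g(C)=F$. This is exactly $3$-transitivity of the $C_n$-action on $\syt(\lambda)$.

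There is essentially no obstacle here beyond the bookkeeping above, since all the genuine work has been absorbed into Theorem~\ref{th:3-trans}. The only mild point worth recording is that the statement is non-vacuous, i.e. $N=|\syt(\lambda)|\geq 3$: the smallest non-hook shape is $(2,2)$, which is self-transpose and hence excluded, while every non-hook shape properly containing $(2,2)$ contains either $(3,2)$ or $(2,2,1)$ and therefore has at least five standard tableaux. Hence $N\geq 3$ always holds in the cases covered by the hypotheses.
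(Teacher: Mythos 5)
Your proposal is correct and is exactly the (implicit) deduction the paper intends: the paper states Corollary~\ref{co:3-trans} as an immediate consequence of Theorem~\ref{th:3-trans} without writing out the specialization, and your two observations --- that $(\lambda,\lambda,\lambda)$ is a viable triple of partitions and that the transposition condition is vacuous when $\lambda\neq\lambda^T$ --- are precisely the bookkeeping being suppressed. Nothing is missing.
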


\section{Proof of Theorem B}\label{se:thm-B}

In this section, we deduce the following statement (Theorem~B in the Introduction) from Corollary~\ref{co:3-trans}:
\begin{thm}\label{th:B}
    Suppose that $\lambda$ is not hook-shaped and not self-transpose. Let $N$ be the cardinality of $\syt(\lambda)$. Then the image of $C_n$ in the permutation group $S_N$ of the set $\syt(\lambda)$ is either the whole $S_N$ or the alternating group $A_N$.
\end{thm}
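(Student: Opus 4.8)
The plan is to combine the $3$-transitivity established in Corollary~\ref{co:3-trans} with the classification of finite $3$-transitive permutation groups (via \cite{Cameron_book}, which rests on CFSG) and then to rule out the surviving non-symmetric, non-alternating possibilities by comparing numbers of fixed points of involutions. Concretely, a finite $3$-transitive group of degree $N$ is $S_N$, $A_N$, or lies in one of three families: the affine groups, where $\syt(\lambda)$ is identified with $\mathbb{F}_2^d$ and the image sits inside $AGL(d,2)$ (so $N=2^d$); the projective groups, where $\syt(\lambda)$ is identified with $\mathbb{F}_q\mathbb{P}^1$ and the image sits inside $P\Gamma L(2,q)$ (so $N=q+1$); and the Mathieu groups $M_N$ with $N\in\{11,12,22,23,24\}$. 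Since each family forces a strong constraint on the fixed-point numbers of its involutions, the strategy is to exhibit, in the image of $C_n$, involutions whose fixed-point numbers are incompatible with each family.

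The key tool is the observation that Bender--Knuth involutions with non-adjacent indices commute and act on disjoint sets of values. Hence, for any $S\subseteq\{1,\ldots,n-1\}$ containing no two consecutive integers, the product $g_S:=\prod_{i\in S}t_i$ is an involution in the image of $C_n$, and since the factors move disjoint value-sets each factor's decision to swap depends only on its own pair. Therefore
\[
\mathrm{Fix}(g_S)=\bigcap_{i\in S}\mathrm{Fix}(t_i)=\{\,T\in\syt(\lambda)\mid i \text{ and } i+1 \text{ occupy adjacent cells of } T \text{ for every } i\in S\,\}.
\]
This lets me tune the number of fixed points at will: a single $t_i$ (with $2\le i\le n-1$) fixes every tableau in which $i,i+1$ lie in a common row or column, a large share of $\syt(\lambda)$, whereas a product over a large independent set $S$ imposes many simultaneous adjacencies and so fixes very few tableaux.

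With this in hand I would record the fixed-point constraints and contradict them. A nonidentity involution of $P\Gamma L(2,q)$ fixes at most $\sqrt{q}+1=\sqrt{N-1}+1$ points (the maximum being a Baer subline), while a nonidentity involution of a subgroup of $AGL(d,2)$ is either fixed-point-free or fixes a power of $2$ that is at least $2^{\lceil d/2\rceil}\ge\sqrt{N}$ points; indeed, writing such a map over $\mathbb{F}_2$ as $x\mapsto (I+M)x+b$ with $M^2=0$, its fixed set is empty or a coset of $\ker M$ of size $2^{d-\operatorname{rank}M}$ with $\operatorname{rank}M\le d/2$. Thus the projective family is excluded by producing a single nonidentity $t_i$ fixing more than $\sqrt{N-1}+1$ tableaux, and the affine family is excluded by producing a nonidentity $g_S$ fixing a positive number, but fewer than $\sqrt{N}$, of tableaux. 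The Mathieu family is dispatched by a finite check: only finitely many non-hook, non-self-transpose $\lambda$ satisfy $|\syt(\lambda)|\in\{11,12,22,23,24\}$, and for each I would compare the known involution fixed-point numbers of $M_N$ (namely $3$; $0$ or $4$; $6$; $7$; $0$ or $8$) against those realized by Bender--Knuth involutions.

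The main obstacle is the affine case together with the uniformity of the constructions across all $\lambda$. For shapes near a hook (for instance $\lambda=(n-2,2)$) the degree $N$ grows only polynomially in $n$, so I cannot simply invoke ``$N$ large'' to push a single $t_i$ past the projective threshold or to drive $|\mathrm{Fix}(g_S)|$ below $\sqrt N$; both require genuine combinatorial estimates on the number of standard Young tableaux subject to prescribed adjacencies of consecutive entries, as well as the verification that the chosen $g_S$ is nonidentity and retains at least one fixed tableau. The few remaining small shapes, where such estimates do not yet apply, would be settled by direct computation. I expect the careful choice of the independent set $S$ (tailored to the two-row and two-column near-hook families) and the proof that it yields a fixed-point count strictly between $0$ and $\sqrt N$ to be the technical heart of the argument.
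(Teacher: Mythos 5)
Your overall architecture coincides with the paper's: invoke Corollary~\ref{co:3-trans}, cite the classification of $3$-transitive groups, and kill the affine, projective, and Mathieu cases by fixed-point counts of involutions built from commuting Bender--Knuth moves (your identity $\mathrm{Fix}(g_S)=\bigcap_{i\in S}\mathrm{Fix}(t_i)$ for $S$ without consecutive integers is correct and is exactly the mechanism the paper uses with $t_2t_4$). However, the quantitative claims that carry the argument are deferred rather than proved, and in the affine case the specific construction you propose is genuinely problematic. You want a nonidentity involution $g_S$ with $0<|\mathrm{Fix}(g_S)|<\sqrt N$. For the natural maximal independent set $S=\{1,3,5,\dots\}$, the set $\mathrm{Fix}(g_S)$ is precisely the set of standard domino tableaux of shape $\lambda$: this is \emph{empty} whenever $\lambda$ has nonempty $2$-core, and when nonempty it is governed by the $2$-quotient and need not be smaller than $\sqrt{N}$. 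So the existence of such an involution for every relevant $\lambda$ is neither verified nor obviously true, and this is the step on which your exclusion of $AGL(d,2)$ rests. Similarly, the projective case needs a proof that some single $t_i$ fixes more than $\sqrt{q}+1$ tableaux uniformly in $\lambda$; you correctly flag that near-hook shapes prevent a soft ``$N$ large'' argument, but no estimate is supplied.

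The paper's proof closes exactly these gaps by requiring only \emph{lower} bounds, which propagate by induction because $|\syt(\lambda)|$ and the fixed-point counts of $t_2$, $t_3$, $t_2t_4$ are each additive over removal of a corner box. Proposition~\ref{pr:fixed-points-of-involutions} checks small $n$ by hand and concludes that for all non-hook $\lambda$ outside a $3\times3$ box these three involutions have pairwise \emph{distinct} fixed-point counts, each exceeding $N/8$, with $t_3$ exceeding $N/3$. Then: three distinct conjugacy classes of involutions with nonzero fixed points rule out the Mathieu groups (which have at most two, or in $M_{22}.2$ a third that is fixed-point-free); $|\mathrm{Fix}(t_3)|>N/3$ beats the Baer-subline bound $\sqrt q+1$ and rules out $\mathrm{P\Gamma L}(2,q)$; and in $AGL(d,2)$ the fixed sets are affine subspaces, so three distinct counts all exceeding $2^{d-3}$ would have to be three distinct values in $\{2^{d-1},2^{d-2}\}$ --- a contradiction requiring no upper bound at all. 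I recommend you replace your ``positive but fewer than $\sqrt N$'' target with this pigeonhole on powers of $2$, since it converts the technical heart of your plan into an elementary counting induction.
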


\begin{proof}

$3$-transitive actions apart from the standard actions of $S_N$ and $A_N$ are classified as follows, see e.g. \cite[Chapter 7]{Cameron_book}:


\medskip

\begin{tabular}{|l|l|c|}
\hline
\textbf{Group} & \textbf{Set Acted Upon} & \textbf{Degree of Transitivity} \\
\hline
$M_{11}$ & 11 points & 4-transitive \\
$M_{11}$ & 12 points & 3-transitive \\
$M_{12}$ & 12 points & 5-transitive \\
$M_{22}$ or $M_{22}.2$ & 22 points & 3-transitive \\
$M_{23}$ & 23 points & 4-transitive \\
$M_{24}$ & 24 points & 5-transitive \\
\hline
$\mathrm{PSL}(2,q)\subset G\subset\mathrm{P\Gamma L}(2,q)$, $q\ge5$ & $\mathbb{P}^1(\mathbb{F}_q)$ (q+1 points) & 3-transitive \\
\hline
$\mathrm{AGL}(d, 2)$, $d\ge3$ & $\mathbb{F}_{2}^{d}$  ($2^d$ points) & 3-transitive \\
$2^4.A_7\subset\mathrm{AGL}(4, 2)$ & $\mathbb{F}_2^4$ ($16$ points) & 3-transitive \\
\hline
\end{tabular}

\medskip

Here, the first six are standard actions of the Mathieu groups, the next is a group of projective transformations of a line over a finite field $\mathbb{F}_q$ possibly twisted with automorphisms of $\mathbb{F}_q$, and the last two examples are (subgroups of) affine transformations of a space over the $2$-element field $\mathbb{F}_2$. The general idea of proof of the Theorem is to show that the image of the cactus group $C_n$ in the group of permutations of $\syt(\lambda)$ has too many involutions with pairwise different numbers of fixed points, which excludes all of the above cases. 

\subsection{Involutions with many fixed points}

\begin{prop}\label{pr:fixed-points-of-involutions}
\begin{enumerate}
    \item For any non-hook-shaped $\lambda\vdash n\ge5$, the involutions $t_3,t_2,t_2t_4$ are non-trivial on $\syt(\lambda)$.
    \item For any non-hook-shaped $\lambda$ that does not fit into the rectangle $3\times 3$, the involutions $t_3,t_2,t_2t_4$ have pairwise different numbers of fixed points in $\syt(\lambda)$.
    \item For any non-hook-shaped $\lambda\vdash n\ge7$, the number of fixed points for each of these involutions is strictly bigger than $\frac{|\syt(\lambda)|}{8}$. 
    \item For any non-hook-shaped $\lambda\vdash n\ge6$, the number of $t_3$-fixed points on $\syt(\lambda)$ is bigger than $\frac{|\syt(\lambda)|}{3}$. 
\end{enumerate}
\end{prop}

\begin{proof}
    The first assertion is seen from the following table showing the numbers of fixed points of $t_3$, $t_2$, and $t_2 t_4$ for all partitions of 5:

\begin{tabular}{|c|c|c|c|c|}
\hline
\textbf{Partition of 5} & \#SYT & $t_3$ & $t_2$ & $t_2 t_4$ \\
\hline
(5) & 1 & 1 & 1 & 1 \\
(4,1) & 4 & 2 & 2 & 0 \\
(3,2) & 5 & 3 & 1 & 1 \\
(3,1,1) & 6 & 2 & 2 & 2 \\
(2,2,1) & 5 & 3 & 1 & 1 \\
(2,1,1,1) & 4 & 2 & 2 & 0 \\
(1,1,1,1,1) & 1 & 1 & 1 & 1 \\
\hline
\end{tabular}

    Next, for a given $\lambda\vdash n$, the numbers $|\syt(\lambda)|$ and the numbers of fixed points for the above involutions in $\syt(\lambda)$ are the sums of those numbers over all the Young diagrams obtained from $\lambda$ by removing a corner box. So once the third and the fourth assertions are true for $n=6$ and $n=7$, they hold true for bigger $n$ as well. This is see from the following tables displaying the numbers of fixed points of $t_3$, $t_2$, and $t_2 t_4$ for all partitions of 6 and 7, respectively:

\begin{tabular}{|c|c|c|c|c|}
\hline
\textbf{Partition of 6} & \textbf{\#SYT} & $t_3$ & $t_2$ & $t_2 t_4$ \\
\hline
(6) & 1 & 1 & 1 & 1 \\
(5,1) & 5 & 3 & 3 & 1 \\
(4,2) & 9 & 5 & 3 & 1 \\
(4,1,1) & 10 & 4 & 4 & 2 \\
(3,3) & 5 & 3 & 1 & 1 \\
(3,2,1) & 16 & 8 & 4 & 4 \\
(3,1,1,1) & 10 & 4 & 4 & 2 \\
(2,2,2) & 5 & 3 & 1 & 1 \\
(2,2,1,1) & 9 & 5 & 3 & 1 \\
(2,1,1,1,1) & 5 & 3 & 3 & 1 \\
(1,1,1,1,1,1) & 1 & 1 & 1 & 1 \\
\hline
\end{tabular}

\begin{tabular}{|c|c|c|c|c|}
\hline
\textbf{Partition of 7} & \#SYT & $t_3$ & $t_2$ & $t_2 t_4$ \\
\hline
(7) & 1 & 1 & 1 & 1 \\
(6,1) & 6 & 4 & 4 & 2 \\
(5,2) & 14 & 8 & 6 & 2 \\
(4,3) & 14 & 8 & 4 & 2 \\
(5,1,1) & 15 & 7 & 7 & 3 \\
(4,2,1) & 35 & 19 & 11 & 7 \\
(3,3,1) & 21 & 11 & 5 & 5 \\
(3,2,2) & 21 & 11 & 5 & 5 \\
(4,1,1,1) & 20 & 8 & 8 & 4 \\
(3,2,1,1) & 35 & 19 & 11 & 7 \\
(2,2,2,1) & 14 & 8 & 4 & 2 \\
(3,1,1,1,1) & 15 & 7 & 7 & 3 \\
(2,2,1,1,1) & 14 & 8 & 6 & 2 \\
(2,1,1,1,1,1) & 6 & 4 & 4 & 2 \\
(1,1,1,1,1,1,1) & 1 & 1 & 1 & 1 \\
\hline
\end{tabular}

    Finally, by the same reason, for any $\lambda$, the numbers of fixed points for $t_3$, $t_2$ and $t_2t_4$ form a non-increasing sequence. Moreover, it is seen from the above tables that for any non-hook-shaped Young diagram that does not fit into the rectangle $3\times3$, these numbers strictly decrease. This proves the second assertion of the Proposition.
\end{proof}

\subsection{Excluding Mathieu groups}

\begin{prop}
    Let $N=|\syt(\lambda)|$ The image of $C_n$ in the permutation group $S_N$ of $\syt(\lambda)$ is never the Mathieu subgroup $M_N\subset S_N$ and it also cannot be $M_{22}.2\subset S_{22}$.
\end{prop}

\begin{proof}
By the second assertion of Proposition~\ref{pr:fixed-points-of-involutions}, the involutions $t_3,t_2,t_2t_4$ have pairwise different and nonzero numbers of fixed points in $\syt(\lambda)$, so their images in $S_N$ represent pairwise different conjugacy classes. On the other hand, according to \cite{Conway1985ATLAS}, Mathieu groups are known to have at most $2$ conjugacy classes of non-trivial involutions. The group $M_{22}.2$ has $3$ conjugacy classes of involutions, but the elements of one of them act on the $22$-element set without fixed points, so it is also impossible.
\end{proof}

\subsection{Excluding subgroups of $\rm{P}\Gamma\rm{L}(2,q)$}

\begin{prop}
    It is impossible that $\syt(\lambda)=\mathbb{P}^1(\mathbb{F}_q)$ with $q\ge5$ and the image of $C_n$ is in $\rm{P}\Gamma\rm{L}(2,q)$.
\end{prop}

\begin{proof}
    
    Note that under the above assumptions, $n\ge5$ so $t_3$ has at least $3$ fixed points on $\syt(\lambda)$. Suppose that $t_3$ is represented by an element $g\circ\tau\in \rm{P}\Gamma\rm{L}(2,q)$ where $g\in PGL(2,q)$ and $\tau$ is an automorphism of $\mathbb{F}_q$. Then, up to conjugation with $PGL(2,q)$, we can assume that this element fixes the points $0,1,\infty\in\mathbb{P}^1(\mathbb{F}_q)$, so, up to conjugation with $PGL(2,q)$, it is just the field automorphism $\tau$. The fixed points of $\tau$ are $\mathbb{P}^1(\mathbb{F}_q^\tau)$, the projective line over the field of invariants. $\mathbb{F}_q^\tau$ is a subfield in $\mathbb{F}_q$ so since $\tau$ is an involution it is some $\mathbb{F}_l$ with $q=l^2$. In particular, the ratio of the cardinalities of $\mathbb{P}^1(\mathbb{F}_q)$ and $\mathbb{P}^1(\mathbb{F}_q^\tau)$ is $\frac{l^2+1}{l+1}$ which is greater than $3$ unless $q=9, l=3$ (in the latter case $|\syt(\lambda)|=10$ which never happens for non-hook-shaped $\lambda$). By the last assertion of Proposition~\ref{pr:fixed-points-of-involutions}, this ratio should be greater than $3$, so we have a contradiction.
\end{proof}

\subsection{Excluding subgroups of $AGL(d,2)$}
It remains to show that it can never happen that $\syt(\lambda)=\mathbb{F}_2^d$ and the image of $C_n$ is $AGL(d,2)$. We will do this by comparing the numbers of fixed points of involutions in $AGL(d,2)$ and of those in the image of $C_n$.

\begin{prop}
    Suppose that $\lambda\vdash n$ is a non-hook-shaped, non-self-transpose Young diagram such that $|\syt(\lambda)|=2^d$ for some $d$ (so there is a bijection $\syt(\lambda)\cong\mathbb{F}_2^d$). Then the image of $C_n$ is not contained in $AGL(d,2)$.
\end{prop}

\begin{proof}
    According to the third statement of Proposition~\ref{pr:fixed-points-of-involutions}, under the above assumptions, the involutions $t_3,t_2,t_2t_4$ have pairwise different numbers of fixed points in $\syt(\lambda)$, all strictly bigger than $2^{d-3}$. On the other hand, fixed points of any affine transformation of $\mathbb{F}_2^d$ form an affine subspace in $\mathbb{F}_2^d$, so their number can only be a power of $2$, so it can be only $2^{d-1}$ or $2^{d-2}$. This is a contradiction.
\end{proof}

This completes the proof of the Theorem.  
\end{proof}

\section{Galois groups of Bethe ansatz equations and of Schubert problems}\label{se:sottile}

\subsection{Bethe ansatz in Gaudin model}
In \cite{Halacheva_Kamnitzer_Rybnikov_Weekes_2020}, Halacheva, Kamnitzer, Weekes, and the second author interpreted the action $C_n$-action on a Kashiwara crystal as a \emph{monodromy action} on solutions to Bethe ansatz in the Gaudin model \cite{Gaudin2014} attached to a semisimple Lie algebra $\mathfrak{g}$ (that is a completely integrable quantum spin chain arising as a degeneration of the XXX Heisenberg model). Algebraically, the Gaudin model is the problem of simultaneous diagonalization of certain pairwise commuting operators $H_i(\underline{z})$ called \emph{Gaudin Hamiltonians} on the space of states being the Hom-space
\[
\text{Hom}_{\mathfrak{g}}(V(\mu),V(\lambda^{(1)})\otimes\ldots\otimes V(\lambda^{(n)})).
\]

Here $V(\lambda)$ is the irreducible finite-dimensional representation of a semisimple Lie algebra $\mathfrak{g}$ with the highest weight $\lambda$. The commuting operators $H_i(\underline{z})$ in question depend on a collection \( \underline{z}=(z_1,\ldots,z_n) \) of pairwise different complex numbers (see e.g. \cite{Frenkel2005} for the details). The joint eigenvalues are uniquely determined by solutions of \emph{Bethe ansatz equations}, which are algebraic equations having the $z_i$'s as parameters, see e.g. \cite{Feigin1994Gaudin}. The Galois group of these Bethe ansatz equations (more precisely, the Galois group of the extension of the field $\mathbb{C}(z_1,\ldots,z_n)$ by the eigenvalues of the Gaudin Hamiltonians) naturally permutes the joint eigenvalues of the $H_i(\underline{z})$.

The parameter $\underline{z}$ can be regarded as an element of the moduli space of stable rational curves with $n+1$ marked points \( M_{0,n+1} \), and the commuting operators $H_i(\underline{z})$ can also be extended to the Deligne-Mumford compactification \( \overline{M}_{0,n+1} \) of this moduli space (see \cite{AguirreFelderVeselov2011} for the details). According to \cite{Rybnikov2018}, the same holds for maximal commutative subalgebras of operators in the space of states containing Hamiltonians $H_i(\underline{z})$. Moreover, according to \cite{Rybnikov2020,Halacheva_Kamnitzer_Rybnikov_Weekes_2020} for the real values of the parameter, i.e. for \( \underline{z}\in\overline{M}_{0,n+1}(\mathbb{R}) \) their joint eigenvalues have no multiplicities. So the cactus group $C_n=\pi_1^{S_n}(\overline{M}_{0,n+1}(\mathbb{R}))$ acts naturally by transporting the joint eigenvalues of the Gaudin Hamiltonians in the space of states along certain paths in \( \overline{M}_{0,n+1}(\mathbb{R}) \). All such monodromy transformations of the set of joint eigenvalues of the Gaudin Hamiltonians come from some elements of the Galois group of the Gaudin eigenproblem, so we have a homomorphism from the cactus group $C_n$ to that Galois group. In general, this homomorphism is far from being subjective, so the image of $C_n$ can be regarded as a relatively small but understandable piece of the whole Galois group. Namely, the main result of \cite{Halacheva_Kamnitzer_Rybnikov_Weekes_2020} establishes the isomorphism of this $C_n$-action with the one on the corresponding multiplicity space for Kashiwara crystals, 
\[
\text{Hom}(B(\mu),B(\lambda^{(1)})\otimes\ldots\otimes B(\lambda^{(n)})),
\]
with the action of $C_n$ coming from the coboundary category formalism of \cite{HK}.

For $\mathfrak{g}=\mathfrak{gl}_d$, the highest weights $\lambda^{(i)}$ are just partitions (or Young diagrams) of the height not greater than $d$. Moreover, by the combinatorial version of Schur-Weyl duality (called \emph{Robinson-Schensted correspondence}), in the special case when $\lambda^{(i)}=(1)$ (i.e. is a $1$-box Young diagram) and $\mu$ is an arbitrary partition of $n$, the latter $C_n$-set is $\syt(\mu)$ with the $C_n$-action given by partial Sch\"utzenberger involutions. So, by Theorem~\ref{th:B} if $\mu$ non-hook-shaped and not self-transpose partition of $n$, the image of $C_n$ is large enough to cover (almost) the whole Galois group, and we have the folowing:

\begin{cor}\label{co:Bethe-ansatz} (Theorem~C in the Introduction) Let the weights $\mu,\lambda^{(1)},\ldots,\lambda^{(n)}$ be as follows \begin{itemize}
    \item $\mu$ non-hook-shaped and not self-transpose partition of $n$;
    \item  for $i>0$, $\lambda^{(i)}=(1)$ i.e. one-box Young diagrams.
\end{itemize} 
The Galois group of solutions to Bethe ansatz equations for the Gaudin model corresponding to $\mu,\lambda^{(1)},\ldots,\lambda^{(n)}$ is at least the alternating group.
\end{cor}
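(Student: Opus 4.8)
The plan is to assemble the corollary from three ingredients already in place: the identification of the Bethe solution set with $\syt(\mu)$, the identification of the monodromy $C_n$-action with the partial Sch\"utzenberger action studied throughout the paper, and the fact that monodromy transformations sit inside the Galois group. The bulk of the work has been done in Theorem~\ref{th:B}; what remains is to correctly thread the geometric setup through to a permutation action on a finite set.

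First I would fix the setup so that the Galois action is literally a permutation action. By \cite{MTV_symmetric_gp}, when all $\lambda^{(i)}=(1)$ and $\mu\vdash n$, the solutions of the Bethe ansatz equations are in bijection with $\syt(\mu)$, and by \cite{Rybnikov2020,Halacheva_Kamnitzer_Rybnikov_Weekes_2020} for real parameters $\underline z\in\overline M_{0,n+1}(\mathbb R)$ the joint eigenvalues of the Gaudin Hamiltonians are simple. Simplicity is what makes the monodromy well-defined: transporting the finitely many simple joint eigenvalues along a path in the real locus yields a bijection of the solution set, and for loops this bijection lies in the Galois group of the splitting field of the eigenvalues over $\mathbb C(z_1,\ldots,z_n)$.

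Next I would invoke the main theorem of \cite{Halacheva_Kamnitzer_Rybnikov_Weekes_2020}, which identifies this monodromy $C_n$-action with the coboundary-category (crystal commutor) action on the multiplicity space $\mathrm{Hom}(B(\mu),B((1))^{\otimes n})$. Under Robinson--Schensted this multiplicity space is exactly $\syt(\mu)$, and the commutor action is precisely the partial Sch\"utzenberger action generated by the $s_{[1,i]}$, i.e. the $C_n$-action of Proposition~\ref{pr:Schutzenberger}. Consequently the image of $C_n$ in the permutation group of the solution set coincides with its image in $S_N$ for the action on $\syt(\mu)$ analyzed in this paper, and this image is a subgroup of the Galois group.

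Finally, since $\mu$ is by hypothesis non-hook-shaped and not self-transpose, Theorem~\ref{th:B} shows that this image is either the full symmetric group $S_N$ or the alternating group $A_N$, where $N=|\syt(\mu)|$. In either case it contains $A_N$, so the Galois group, which contains the image of $C_n$, is at least $A_N$. The one genuinely nontrivial step is the passage from ``monodromy transformation'' to ``element of the Galois group'': it rests on the algebraic-geometric fact that the eigenvalues are algebraic over $\mathbb C(\underline z)$ with monodromy realized by the covering associated to the eigenproblem, together with eigenvalue simplicity on the real locus. This is exactly what the cited works supply, and everything else is bookkeeping layered on top of Theorem~\ref{th:B}.
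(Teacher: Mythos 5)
Your proposal is correct and follows exactly the paper's own route: identify the Bethe solutions with $\syt(\mu)$ via \cite{MTV_symmetric_gp}, use the main result of \cite{Halacheva_Kamnitzer_Rybnikov_Weekes_2020} (together with eigenvalue simplicity on the real locus) to realize the $C_n$-action of Proposition~\ref{pr:Schutzenberger} as monodromy inside the Galois group, and then apply Theorem~\ref{th:B}. No substantive differences from the argument given in Section~\ref{se:sottile}.
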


\subsection{Schubert problem in Grassmannians} A \emph{Schubert problem on a Grassmannian}  $\mathrm{Gr}(d, N)$ asks for the number of $d$-dimensional subspaces in a $N$-dimensional vector space satisfying incidence conditions with respect to general flags. More formally, Let $V$ be an $N$-dimensional vector space and let $F_\bullet$ be a complete flag:
\[
0 = F_0 \subset F_1 \subset F_2 \subset \cdots \subset F_N = V, \quad \dim F_i = i.
\]

Let $\lambda = (\lambda_1, \lambda_2, \ldots, \lambda_d)$ be a partition with $n-d \geq \lambda_1 \geq \lambda_2 \geq \cdots \geq \lambda_d \geq 0$.

The \emph{Schubert cell} $\Omega_\lambda^\circ(F_\bullet)$ in the Grassmannian $\mathrm{Gr}(d, N)$ is defined as:
\[
\Omega_\lambda^\circ(F_\bullet) = \left\{ W \in \mathrm{Gr}(d, V) \;\middle|\; \dim\left( W \cap F_{N - d + i - \lambda_i} \right) = i \text{ for } 1 \leq i \leq d \right\}
\]
where $W$ is a $d$-dimensional subspace of $V$.

The closure $\Omega_\lambda(F_\bullet)$ (the Schubert variety) is given by:
\[
\Omega_\lambda(F_\bullet) = \left\{ W \in \mathrm{Gr}(d, V) \;\middle|\; \dim\left( W \cap F_{N - d + i - \lambda_i} \right) \geq i \text{ for } 1 \leq i \leq d \right\}.
\]

The dimension of the Schubert cell $\Omega_\lambda^\circ$ in the Grassmannian $\mathrm{Gr}(d, N)$ associated to the partition $\lambda = (\lambda_1, \ldots, \lambda_d)$ is
\[
\dim \Omega_\lambda^\circ = d(N - d) - |\lambda| = d(N - d) - \sum_{i=1}^d \lambda_i,
\]
where $|\lambda| = \sum_{i=1}^d \lambda_i$ is the number of boxes in the Young diagram corresponding to $\lambda$. That is, the codimension of $\Omega_\lambda^\circ$ in $\mathrm{Gr}(d, N)$ is just $|\lambda|$.

Next, one considers the intersection of Schubert varieties with respect to several different flags \(F_\bullet^{(j)}\) corresponding to partitions $\lambda^{(j)}$. A Schubert problem involves choosing conditions so that this intersection is zero-dimensional and transverse, i.e. the total number of boxes of the Young diagrams $\sum |\lambda^{(j)}|$ equals $d(N-d)$ and the flags \(F_\bullet^{(j)}\) are in generic position, so the intersection is a finite number of points.

The monodromy group of a Schubert problem arises by considering how the solutions vary as the defining flags change continuously. Algebraically, one studies the function field of the parameter space of flags and the \emph{Galois group} of the splitting field of the solutions over it. In \cite{SW}, Sottile and White study such Galois groups. Namely, they prove that such Galois groups are doubly transitive for all Schubert problems involving only special Schubert conditions, as well as for all problems on Grassmannians of 2-planes (\(\mathrm{Gr}(2, n)\)) and 3-planes (\(\mathrm{Gr}(3, n)\)). Moreover, in the latter two cases, the Galois group is at least the alternating group. Our Theorem~C implies the following result, extending this to many new cases (Theorem~D in the introduction). Namely, let $\mu\vdash n$ be a non-hook-shaped and not self-transpose, of the height not greater than $d$ and let $\lambda^{(0)}$ be the Young diagram obtained as the complement of $\mu$ in the $d\times (2n-d)$ rectangle. 

\begin{cor}
    Consider the Schubert problem in the Grassmannian $\textrm{Gr}(d,2n)$ of $d$-dimensional planes in the $2n$-dimensional vector space for the collection of partitions $\lambda^{(0)},\lambda^{(1)},\ldots,\lambda^{(n)}$ with $\lambda^{(0)}$ being as above and all other $\lambda^{(i)}=(1)$, i.e. one-box Young diagrams. The Galois group of this Schubert problem is at least the alternating group.    
\end{cor}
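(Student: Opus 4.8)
The plan is to deduce this statement directly from Corollary~\ref{co:Bethe-ansatz} (Theorem~C) by transporting it across the correspondence of Mukhin, Tarasov and Varchenko \cite{MTV_Schubert,MTV_symmetric_gp} between solutions of the Bethe ansatz equations for the $\mathfrak{gl}_d$ Gaudin model and intersection points of osculating Schubert varieties. The key observation is that the Galois group of the Schubert problem, in the sense of Sottile and White \cite{SW}, contains the monodromy group obtained by letting only the \emph{osculating} flags vary, and that this osculating monodromy group is precisely the Galois group of the Bethe ansatz equations appearing in Theorem~C. Since the latter already contains the image of the cactus group $C_n$, which by Theorem~\ref{th:B} is at least $A_N$, the full Schubert Galois group must contain $A_N$ as well.

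First I would check that the data $(\lambda^{(0)},\lambda^{(1)},\ldots,\lambda^{(n)})$ defines a genuine (zero-dimensional, generically transverse) Schubert problem on $\mathrm{Gr}(d,2n)$. Writing $N=2n$ and using that $\mu\vdash n$, so $|\mu|=n$ and $|\lambda^{(0)}|=d(2n-d)-n$, the codimension count gives
\[
|\lambda^{(0)}|+\sum_{i=1}^{n}|\lambda^{(i)}|=\bigl(d(2n-d)-n\bigr)+n=d(2n-d)=\dim\mathrm{Gr}(d,2n),
\]
so the intersection of the corresponding Schubert varieties is expected to be finite, and it is reduced and transverse for osculating flags by the Mukhin--Tarasov--Varchenko transversality theorem. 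Under the Schur--Weyl (Robinson--Schensted) dictionary, the osculating data consisting of one special condition $\lambda^{(0)}$ placed at $\infty$ and the $n$ one-box conditions $\lambda^{(i)}=(1)$ placed at $z_1,\ldots,z_n$ corresponds on the Gaudin side to the multiplicity space $\mathrm{Hom}_{\mathfrak{gl}_d}(V(\mu),V((1))^{\otimes n})$, whose dimension equals $|\syt(\mu)|=N$. Thus the two solution sets have the same cardinality $N$ and are identified as $C_n$-sets.

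Next I would assemble the chain of inclusions of permutation groups of these $N$ solutions. By \cite{Halacheva_Kamnitzer_Rybnikov_Weekes_2020,MTV_Schubert}, the image of $C_n$ is a subgroup of the Galois group of the Bethe ansatz equations, which under the MTV identification coincides with the monodromy group of the Schubert problem restricted to osculating configurations (parametrized by points on $\mathbb{P}^1$, i.e.\ by $\overline{M}_{0,n+1}$); note that the cactus group itself arises as the monodromy over the \emph{real} osculating locus, hence sits inside the complex osculating monodromy. Since osculating flags form a subvariety of the space of all flags, the associated monodromy group is a subgroup of the full Galois group of the Schubert problem of \cite{SW}. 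Combining, we obtain
\[
A_N\subseteq\operatorname{im}(C_n)\subseteq\mathrm{Gal}(\mathrm{Bethe})=\mathrm{Mon}(\mathrm{osculating})\subseteq\mathrm{Gal}(\mathrm{Schubert}),
\]
so the Schubert Galois group contains $A_N$, as claimed.

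The main obstacle will be to make the dictionary between the two sides completely precise: one must verify that $\mu$ (equivalently its complement $\lambda^{(0)}$ in the $d\times(2n-d)$ rectangle) together with the one-box weights $\lambda^{(i)}=(1)$ corresponds under \cite{MTV_symmetric_gp} to exactly the stated Schubert conditions, and, crucially, that the bijection of solution sets is equivariant for the natural group actions, so that the inclusion $\operatorname{im}(C_n)\subseteq\mathrm{Gal}(\mathrm{Schubert})$ is honest. Once this compatibility is established --- it is essentially the content of the MTV theory combined with \cite{Halacheva_Kamnitzer_Rybnikov_Weekes_2020} --- the conclusion is immediate, and no further analysis of fixed points of involutions (as in the proof of Theorem~\ref{th:B}) is required here.
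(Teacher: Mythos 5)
Your proposal is correct and follows essentially the same route as the paper: both deduce the statement from Corollary~\ref{co:Bethe-ansatz} via the Mukhin--Tarasov--Varchenko identification of the Gaudin eigenproblem with the osculating instance of this Schubert problem, so that the Bethe-ansatz Galois group (which contains the image of $C_n$, hence $A_N$) embeds into the full Schubert Galois group as the monodromy over the osculating sublocus of the flag parameter space. The codimension count and the cardinality check you include are consistent with, though not spelled out in, the paper's argument.
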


\begin{proof}
    In \cite{MTV_Schubert,MTV_symmetric_gp}, it is shown that the Gaudin eigenproblem is the particular case of the Schubert problem. Namely, the the commutative subalgebra generated by higher Gaudin Hamiltonians in the algebra of linear operators in the vector space \(\text{Hom}(V(\mu),V(\lambda^{(1)})\otimes\ldots\otimes V(\lambda^{(n)}))\) is isomorphic to the coordinate ring of the intersection of the following Schubert cells in the space $\mathbb{C}[x]_{<2n}$ of polynomials of the degree smaller than $2n$. Namely, to any $z_i$ one assigns the full flag $(x-z_i)^{2n-1}\mathbb{C}[x]_{<2n}\subset (x-z_i)^{2n-2}\mathbb{C}[x]_{<2n}\subset (x-z_i)^{2n-3}\mathbb{C}[x]_{<2n}\subset\ldots \mathbb{C}[x]_{<2n}$ and takes the Schubert variety with respect to this flag corresponding to the one-box Young diagram $(1)$. Next, one takes the Schubert variety corresponding to the partition $\lambda^{(0)}$ with respect to the flag $\mathbb{C}[x]_{<1}\subset\mathbb{C}[x]_{<2}\subset\ldots\subset\mathbb{C}[x]_{<2n}$. This is an isomorphism of schemes over the parameter space of the Gaudin Hamiltonians, i.e. over $\mathbb{C}^n\setminus\bigcup\limits_{i\ne j}\{z_i=z_j\}$. 
    
    So, the parameter space for the Gaudin Hamiltonians gets embedded into the parameter space for $n+1$ full flags in the $2n$-dimensional space in such a way that the solutions of Bethe ansatz for the Gaudin Hamiltonians get identified with the solutions to the above Schubert problem. This means that the Galois group of the Gaudin Bethe ansatz equations is a subgroup of the Galois group of the Schubert problem. Finally, by Corollary~\ref{co:Bethe-ansatz}, the latter is at least the alternating group. Hence the assertion.   
\end{proof}

\section{$S_N$ or $A_N$?}\label{se:further}

The numerical experiments show that the image of $C_n$ in the permutation group $S_N$ of $\syt(\lambda)$ is more frequently $A_N$ as $n$ gets bigger. On the other hand, there is no number of boxes past which for every non self-transpose and non hook-shaped $\lambda$ the cactus group has the image $A_N$. For example, the following infinite family of non-generic shapes always has $S_N$ as the image of $C_n$.

\begin{example}
    Consider the partition with $2^n+1$ boxes in the first row and 3 boxes in the second row. The non-fixed points of the operation $t_{n-1}$ occur precisely when $n-1$ and $n$ are in the only two corners. Hence, the number of swaps $t_{n-1}$ creates is the number of standard Young tableaux for the partition $(2^n,2)$, which is always odd.
\end{example}

However, for all Young diagrams that are not too wide and not too tall, we always have $A_N$. We thank Yakob Kahane for the idea and the key reference.

For a partition $\lambda$, let $H(\lambda)$ be the set of all hooks of $\lambda$, and $H_t(\lambda)$ be the number of hooks of $\lambda$ divisible by $t\in \mathbb{N}$.

\begin{thm}[Han, \cite{Han2010}, Corollary 5.1]
For a given shape $\lambda$, and $t$ a positive integer, we have
\[\sum_{\lambda} x^{|\lambda|} y^{H_t(\lambda)}
=
\prod_{k\ge 1}
\frac{(1-x^{tk})^{t}}{\bigl(1-(y x^{t})^{k}\bigr)^{t}\,(1-x^{k})}.
\]
\end{thm}

\begin{cor}\label{co:ht}
For all shapes $\lambda$, $H_t(\lambda)\le \left\lfloor \frac{|\lambda|}{t}\right\rfloor$.
\end{cor}

\begin{proof}
The RHS of the above formula is a power series of $x$ and $yx^t$,
so the power of $x$ in each monomial is greater than $t$ times the power of $y$.
\end{proof}

This implies the following:

\begin{prop} \label{pr:|syt|=even}
    Let $\lambda$ be a partition of $n$, and $m$ the integer such that $2^m\leq n <2^{m+1}$. If $\lambda$ fits into a $2^{m-1}\times 2^{m-1}$ grid, then $|\syt(\lambda)|$ is even.
\end{prop}

\begin{proof}
For any integer $k$, denote by $v_2(k)$ the maximal such that $k$ is divisible by $2^m$. Then, by the hook length formula, for any $\lambda\vdash n$, we have:
\begin{align*}
v_2(|\syt(\lambda)|)
&= v_2\!\left(\frac{n!}{\prod_{h\in H(\lambda)} |h|!}\right) \\
&= \sum_{k\ge 1}\left\lfloor \frac{n}{2^k}\right\rfloor
\;-\; \sum_{h\in H(\lambda)} v_2(|h|) \\
&= \sum_{k\ge 1}\left\lfloor \frac{n}{2^k}\right\rfloor
\;-\; \sum_{i\ge 1} i\cdot\bigl(H_{2^i}-H_{2^{i+1}}\bigr) \\
&= \sum_{k\ge 1}\left(\left\lfloor \frac{n}{2^k}\right\rfloor - H_{2^k}(\lambda)\right).
\end{align*}
From Corollary~\ref{co:ht}, we see that each term of the above sum is non-negative. However, since $\lambda$ fits into a $2^{m-1}\times 2^{m-1}$ grid, there are no hooks in $\lambda$ of size greater or
equal $2^m$.
Hence,
\[
\left\lfloor \frac{n}{2^m}\right\rfloor
- H_{2^{\lfloor\log_2(n)\rfloor}}(\lambda)
= 1 - 0 = 1.
\]
Therefore $v_2(|\syt(\lambda)|)\ge 1$.
\end{proof}

This implies the following:

\begin{cor}\label{co:n/8}
    If $\lambda$ fits into a $\left\lceil\frac{n}8\right\rceil\times \left\lceil\frac{n}8\right\rceil$ grid, then the image of $C_n$ is contained in $A_N$.
\end{cor}

\begin{proof}
 The group $C_n$ can be generated by the elements $s_{[1i]}$ with $i\le\frac{n}{2}$ and $t_j$ with $j\ge\frac{n}{2}$. Note that $s_{[1i]}$ is conjugate to $s_{[n-i+1,n]}$ in the cactus group, so, by Proposition~\ref{pr:Schutzenberger} all these generators are at least conjugate to the ones preserving subtableau formed by the boxes that contain the numbers up to $\frac{n}{2}-1$. Next, for any standard Young tableau $T$, its subtableau $T'$ formed by the boxes that contain the numbers up to $\frac{n}{2}-1$ satisfies the condition of Proposition~\ref{pr:|syt|=even}. So, the number of independent transpositions in any of the above generators is a multiple of the number of such $T'$ -- which is even! Thus all the generators of the image of $C_n$ are even permutations of $\syt(\lambda)$.
\end{proof}

\noindent \textbf{Remark.} We define \emph{generic} partitions as those partitions that fit into a $2\sqrt2\cdot \sqrt{|\lambda|}$ by $2\sqrt2\cdot \sqrt{|\lambda|}$ grid. This definition of generic accounts for all randomly chosen partitions of $n$ in the limit according to the Plancherel measure described by Vershik  and Kerov in \cite{Vershik_Kerov_1977}. We have checked by computer how frequent the image of $C_n$ is $A_N$ or $S_N$ for all partitions up to 52 boxes, and \Cref{S_N A_N plot} in the Appendix shows the distribution for all partitions and for generic partitions. The data suggests that the number of images in the alternating group will eventually dominate the number in the full group $S_N$, and indeed \Cref{co:n/8} shows this is true. In particular, when $|\lambda|$ exceeds 504, \Cref{co:n/8} tells us that $A_N$ is the only possible image for generic partitions.

\medskip

\noindent \textbf{Remark.} The computer experiment shows that the condition that $\lambda$ fits into a $2^{m-1}\times 2^{m-1}$ grid in Proposition~\ref{pr:|syt|=even} is sharp; see Figure~\ref{evenness plot} in the Appendix.

\printbibliography

\bigskip

\noindent\footnotesize{{\bf Sophia Liao}
 \\Harvard University, Cambridge MA, USA\\
{\tt sliao@college.harvard.edu}} \\

\noindent\footnotesize{{\bf Leonid Rybnikov} \\
Department of Mathematics and Statistics,
University of Montreal, Montreal QC, Canada\\
{\tt leonid.rybnikov@umontreal.ca}}

\newpage
\appendix \section{Computational Results}

\begin{figure}[ht]
    \centering
    \includegraphics[width=0.95\linewidth]{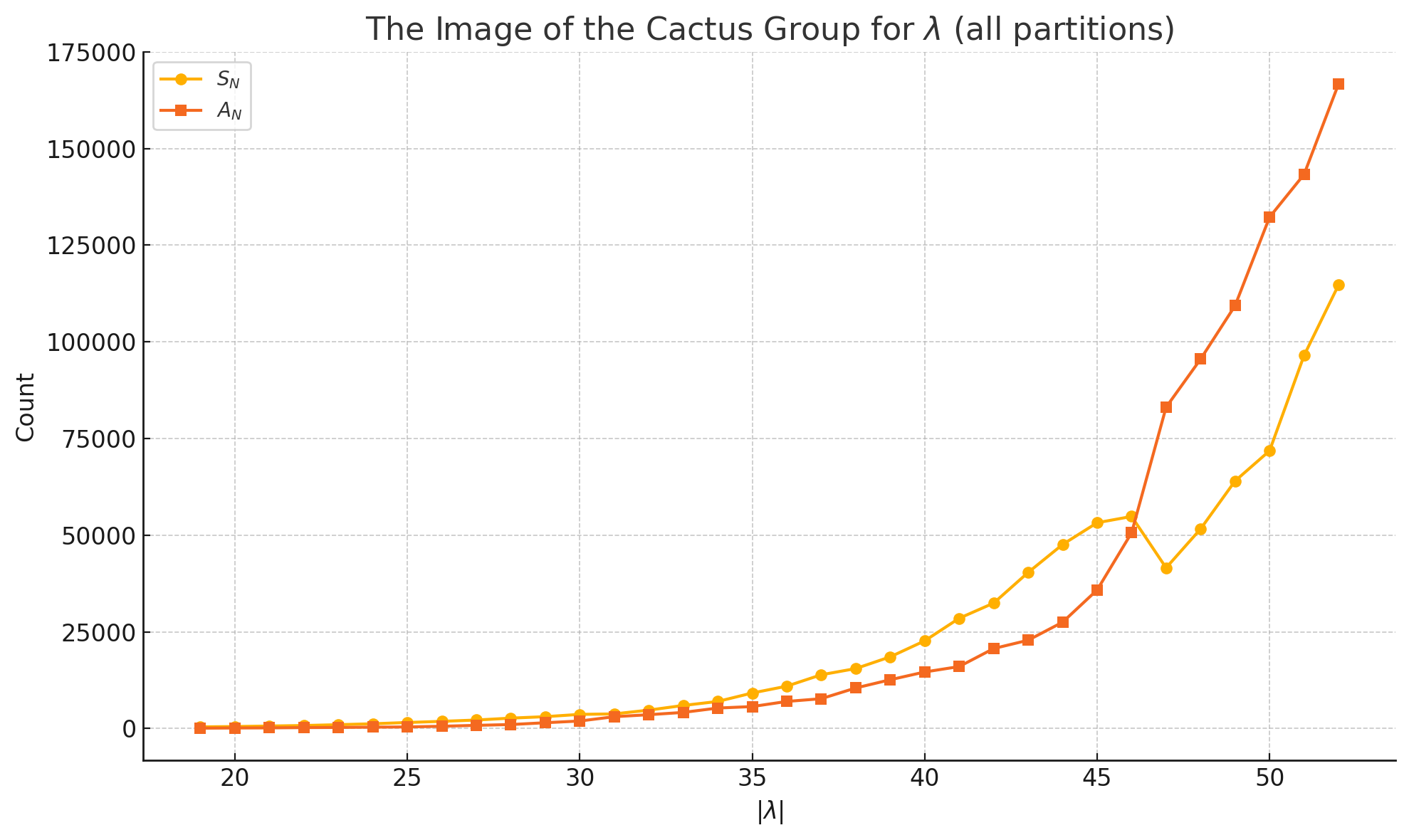}
    \includegraphics[width=0.95\linewidth]{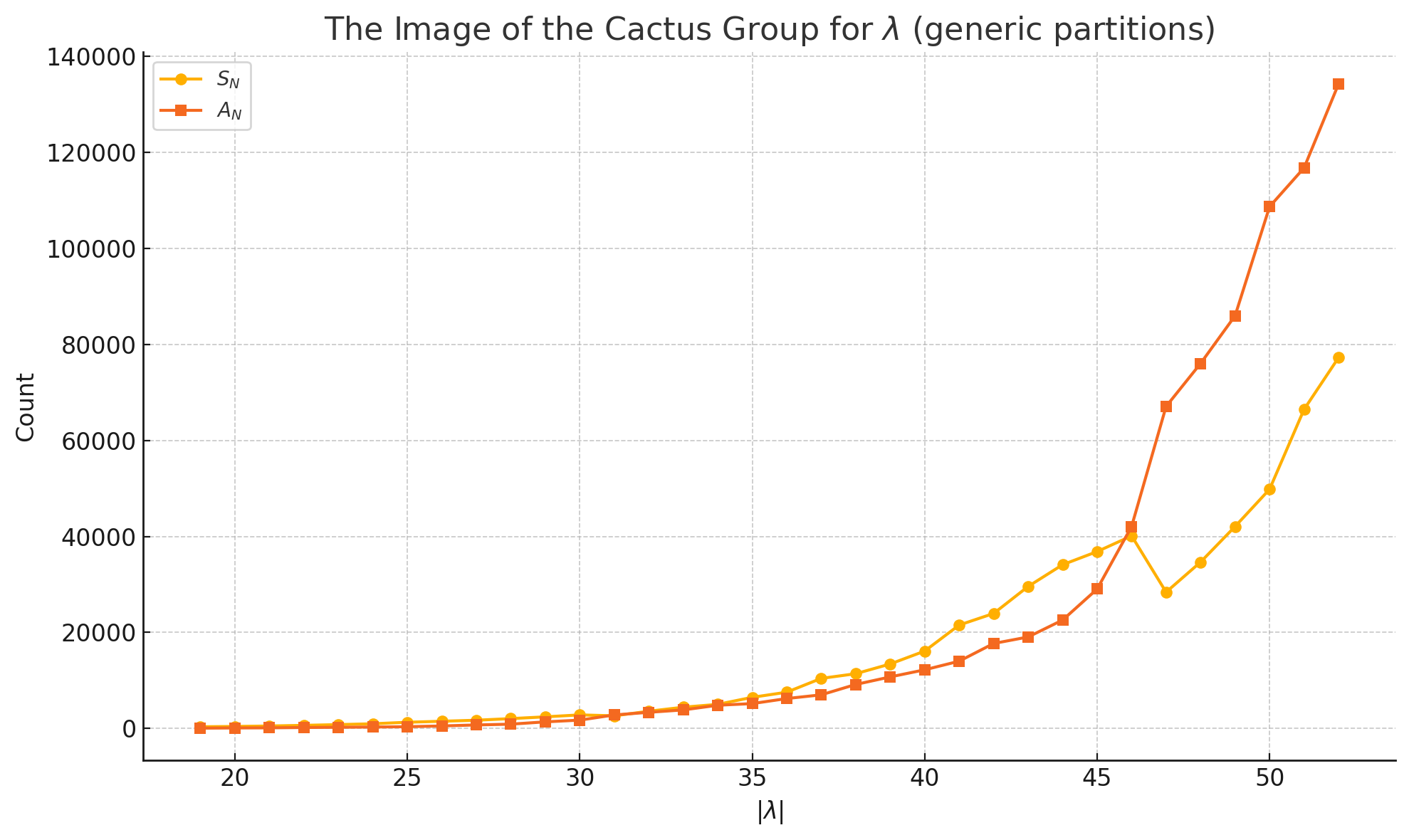}
    \caption{The number of partitions $\lambda$ such that the image of $\syt(\lambda)$ under the cactus group is $S_N$ or $A_N$. The first graph considers all possible partitions, whereas the second only considers generic partitions (those that fit into a $2\sqrt2\cdot \sqrt{|\lambda|}$ by $2\sqrt2\cdot \sqrt{|\lambda|}$ grid).}
    \label{S_N A_N plot}
\end{figure}

\begin{figure}[ht]
    \centering
    \includegraphics[width=0.95\linewidth]{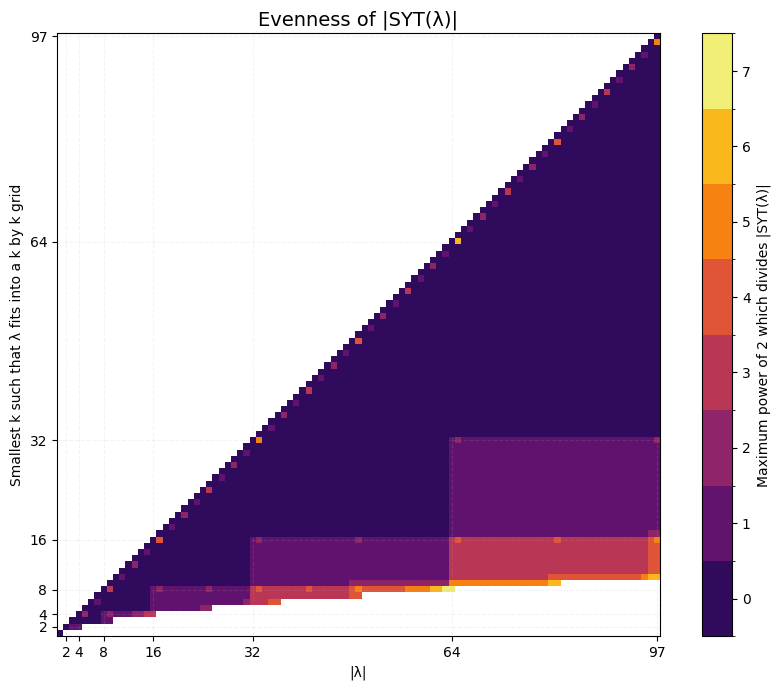}
    \caption{Describes how even $|\syt(\lambda)|$ is for two parameters of $\lambda$: the number of boxes and the maximum of its height and width. Each square on the plot represents all partitions of a given size ($x$-axis) and a given $k$ where $\lambda\subseteq k\times k$ but $\lambda\not\subseteq (k-1)\times(k-1)$ ($y$-axis). The square is assigned a color based on the maximum power of 2 which divides $|\syt(\lambda)|$ for all such $\lambda$ considered by these two criteria. For example, every partition of 32 boxes which fit into a $16\times 16$ grid have an even number of standard Young tableaux, but there is at least one 32-box partition whose height or width is 17 that has an odd number of standard Young tableaux.}
    \label{evenness plot}
\end{figure}

\end{document}